\journal{Not yet submitted}
\definecolor{darkgreen}{rgb}{0.0,0.7,0.0}
\definecolor{darkred}{rgb}{0.75,0.0,0.0}
\newcommand{\R}{\mathbb{R}}
\newcommand{\DD}{\mathcal{D}}
\DeclareMathOperator{\tr}{tr\,}
\DeclareMathOperator*{\argmax}{arg\,max}
\theoremstyle{plain}
\newtheorem{thm}{Theorem}[section]
\newtheorem{lem}[thm]{Lemma}
\theoremstyle{definition}
\newtheorem{defn}{Definition}[section]
\newtheorem{rem}{Remark}[section]
\begin{document}

\begin{frontmatter}

\title{Self-oscillations in an Alpha Stirling Engine: \\ a bifurcation analysis}

\author[auth1]{Dmitry Gromov\corref{corrauth}}
\cortext[corrauth]{Corresponding author}
\ead{dv.gromov@gmail.com}

\author[auth2]{Fernando Casta\~nos}
\ead{castanos@ieee.org}

\address[auth1]{Faculty of Applied Mathematics and Control Processes, St. Petersburg State University, Russia}
\address[auth2]{Departamento de Control Autom\'atico, Cinvestav-IPN, 2508 Av. Instituto 
Polit\'ecnico Nacional, 07360, Mexico City, Mexico}

\begin{abstract}
 We study a thermo-mechanical system comprised by an alpha Stirling engine and a flywheel from the perspective of dynamical systems theory. Thermodynamics establish a static relation between the flywheel's angle and the forces exerted by the two power pistons that
 constitute the engine. Mechanics, in turn, provide a dynamic relation between the forces and the
 angle, ultimately leading to a closed dynamical model. We are interested in the different behaviors
 that the engine displays as parameters are varied. The temperature of the hot piston and 
 the mechanical phase between both pistons constitute our bifurcation parameters. Considering that 
 energy conversion in the engine can only take place through cyclic motions, we are particularly
 interested in the appearance of limit cycles.
\end{abstract}

\begin{keyword}
 Stirling engine \sep Modeling \sep Bifurcation analysis \sep Cylindrical state space
\end{keyword}

\end{frontmatter}
\section{Introduction}

The Stirling engine is an external combustion thermodynamic engine that operates by cyclic expansion and contraction of a working fluid (typically air). Due to their high efficiency and capability of operating at low temperatures and with any heat sources, Stirling engines have found many applications, ranging from electricity generators~\cite{Schreiber:07} and cryocoolers~\cite{Filis:09} to solar power generators~\cite{Kong:03,Tlili:08,Abbas:08} and combined heat and power systems~\cite{Onov:06,Alanne:10,Ferreira:16}. 

A Stirling engine comprises two chambers. The chambers, not necessarily separated physically, are connected through a regenerator and, at the same time, mechanically through a load. The operation of a Stirling engine consists of cyclic heat absorption and discharge, accompanied by mass transfer between the chambers and, consequently, oscillations in the internal energy.  Through a mechanical coupling, there is a continuous exchange of energy between the flywheel and the heat engine. The net effect is producing useful work that can be either stored or transformed into electrical energy.

In this work, we consider the {\em alpha} configuration of the Stirling engine. It consists of two separate cylinders and two power pistons, one in a hot cylinder, one in a cold cylinder. Both pistons are connected to the flywheel in such a way that there is a phase shift in the movement of the pistons. Such phase shift is denoted by $\alpha$. When describing the thermodynamic state of the Stirling engine, we use the classical isothermal Schmidt model~\cite{Schmidt:1871}, which provides reasonable accuracy for a wide range of operating conditions~\cite{Berchowitz:84,Walker:80}. 
The isothermal assumption allows establishing a simple expression between the gas pressure and the cylinders' volumes which, in turn, depend on the positions of the pistons. The latter are determined by the mechanics of the crank and the angle of rotation of the flywheel. The interconnection of the mechanical and thermodynamic components results in a highly nonlinear system whose state is given by the angular position and the velocity of the flywheel.

There are several papers on dynamic modeling of Stirling engines, see, e.g.,~\citep{Schulz:96,Cheng:11,Hooshang:16}, as well as various findings that result from analyzing the engines from a control-theoretic viewpoint: dynamic analysis of a periodically controlled Stirling engine~\cite{CraunBamieh:15,HauserBamieh:15}, local analysis of a controlled free-piston Stirling engine and the identification of the conditions under which oscillations may occur~\cite{Riofrio:08}, linear analysis of a wobble-yoke Stirling engine~\cite{Alvarez:10}, and a control-geometric approach to the description of a Stirling engine~\cite{MuellerCaines:15}. However, to the best of authors' knowledge, no systematic parametric analysis of the Stirling engine dynamic has been undertaken so far.

The continued periodic operation of a Stirling engine under a wide range of varying conditions relies upon the existence of a stable limit cycle, which is typically visualized either in pressure--volume ($p-V$) or in temperature--entropy ($T-S$) variables. The standard thermodynamic analysis is typically carried out under the assumption that such a limit cycle exists. Whilst this assumption is empirically reasonable when the engine operates at high temperatures, it is not clear up to which point is this assumption rational at low temperatures. Motivated by the need for a more formal understanding of thermodynamic cycles,  we take a dynamical systems perspective and investigate the mechanism under which the system transitions from the non-existence to the existence of limit cycles. One of our findings is that the transition takes place through a global bifurcation that, similar to the Andronov-Leontovich bifurcation~\cite{Andronov:59}, involves the brief existence of a homoclinic orbit.

A particular feature of the system model is that it has a cylindrical phase space. The topological difference between planar and cylindrical phase spaces has more 
    implications than it may seem. On the cylinder, for example, not every closed curve can be continuously shrunk to a point. As a consequence, Bendixson's criterion becomes false in general. Also, when written in local coordinates, continuity of the vector field imposes the periodicity of the equations describing it. Periodicity is a form of symmetry that lowers the codimension of some bifurcations. In particular, our model exhibits pitchfork bifurcations which are only of codimension one. Finally, the polynomial normal forms that are commonly used to identify bifurcations are no longer useful, since they are not periodic. The absence of essential pieces of analytic machinery that exists for planar systems  forces us to undertake the first approach to our problem from a numerically oriented perspective.

When carrying out the analysis, we are particularly interested in the qualitative behavior of the system as the temperature of the heat source, $T_h$, changes. We are also interested in the effects of changing $\alpha$. The role of $\alpha$ has been the subject of long-standing debates and, so far, its best value is usually determined experimentally. We hope that the analysis carried out will contribute to the establishment of more rational guidelines for choosing its value in applications.

Besides the bifurcations already mentioned, there are bifurcations of codimension two in which non-hyperbolic equilibria coexist with homoclinic and heteroclinic orbits, or where homoclinic and heteroclinic orbits appear simultaneously. The bifurcations of codimension two organize the $(\alpha,T_h)$-plane into eight distinct regions such that, within each region, every pair of parameters yields a topologically equivalent phase plane. Only four of these regions correspond to phase planes containing a stable limit cycle, and only these regions are suitable for energy conversion.
    
Also on the $(\alpha,T_h)$-plane, we compute level curves for the output power. Computing the power for a given pair of parameters requires the detection of \emph{the} limit cycle (uniqueness is established below), including its period. From the curves, it is possible to determine, for a fixed $T_h$, the value of $\alpha$ that yields the maximal output power. It is also possible, e.g., to estimate the minimal working temperature of the hot piston.

The paper is organized as follows. The dynamic model is derived in Section~\ref{sec:model}. Existence, uniqueness and possible types of limit cycles are discussed in Section~\ref{sec:qualitative}. Section~\ref{sec:numerical} contains the results of the numerical analysis. The occurrence of local and global bifurcations is shown in parameter space and the output power is computed for several parameter pairs. Conclusions and future work are stated at the end of the paper.
\section{System description and dynamical model} \label{sec:model}

\subsection{General principles of operation} 

In the following, we consider the {\em alpha} configuration of a Stirling engine. This particular configuration has two communicating cylinders and two power pistons, both connected to a flywheel as shown in Fig.\ \ref{fig:mechanical}. The first cylinder is in thermal contact with a hot bath (an infinite source) kept at temperature $T_h$, and the second one is attached to a cold bath (an infinite sink) at temperature $T_c$. The gas is moved between the cylinders by the respective pistons. 

\begin{figure}
\centering
\includegraphics[width=0.5\columnwidth]{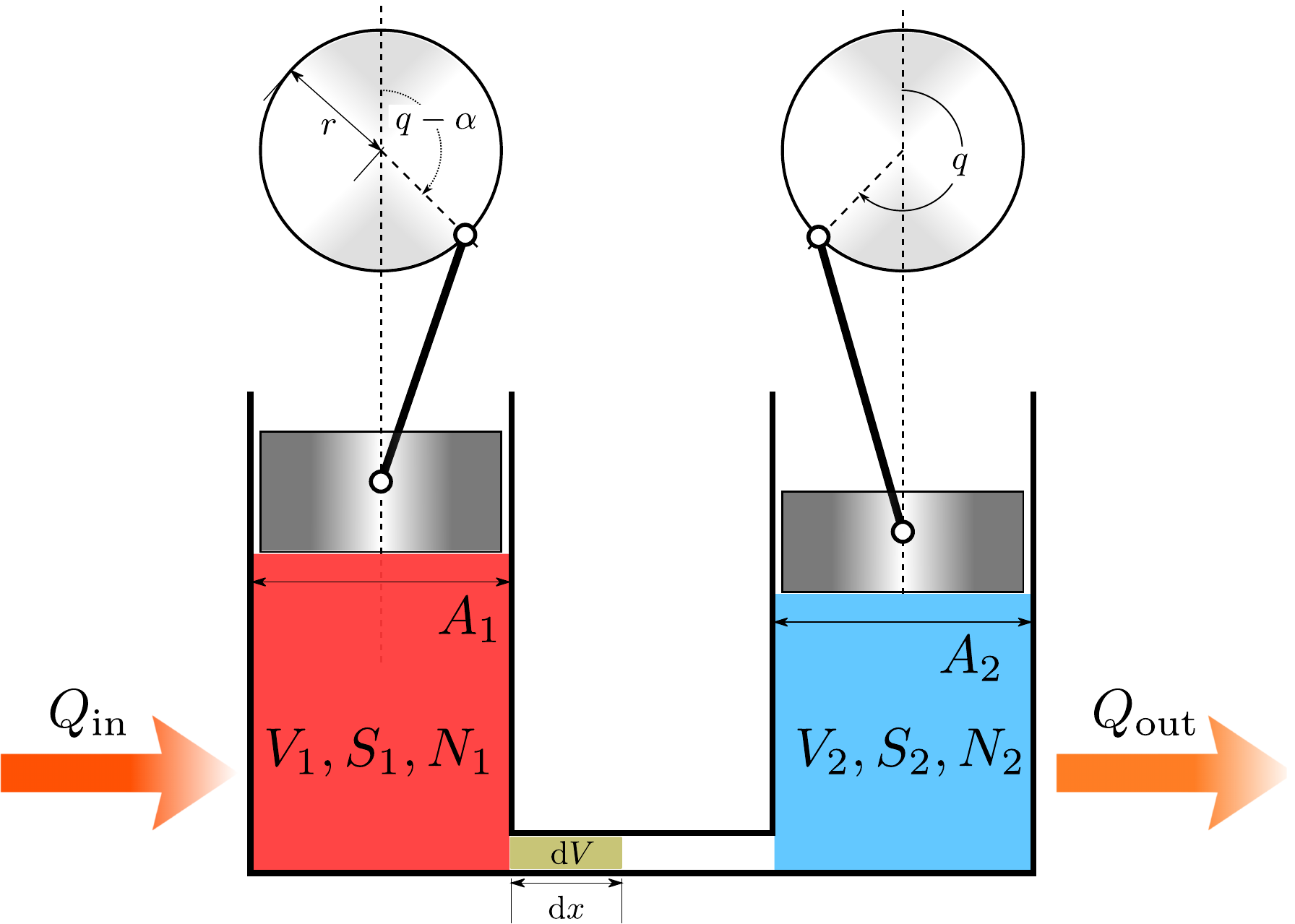}
\caption{Alpha Stirling engine.}
\label{fig:mechanical}
\end{figure}

Since the pistons are connected to the flywheel with a phase shift $\alpha$, their movements produce different effects on the gas contained in the respective cylinders. The whole operation cycle can be separated into four phases as shown in Fig.~\ref{fig:pistons_move}: during the first phase (I) the gas in the second cylinder undergoes compression while the gas in the first cylinder expands; this is followed by compression in both cylinders in phase II; in phase III the gas in the first cylinder continues compressing, while the gas in the second cylinder expands; finally, in the fourth phase (IV) the gas expands in both cylinders. The cyclic operation is accompanied by heat absorption and discharge that occur at different rates, depending on the state of the working fluid within the respective cylinder. The movement of the pistons is accompanied by the mass transfer from the cylinder with higher pressure to the cylinder with lower pressure. The mass transfer equilibrates the pressures, a process that occurs on a fast time scale and can, therefore, be taken to be instantaneous. We can thus assume that the pressure in both cylinders is equal.

\begin{figure}[tbh]
\centering
\includegraphics[width=0.75\columnwidth]{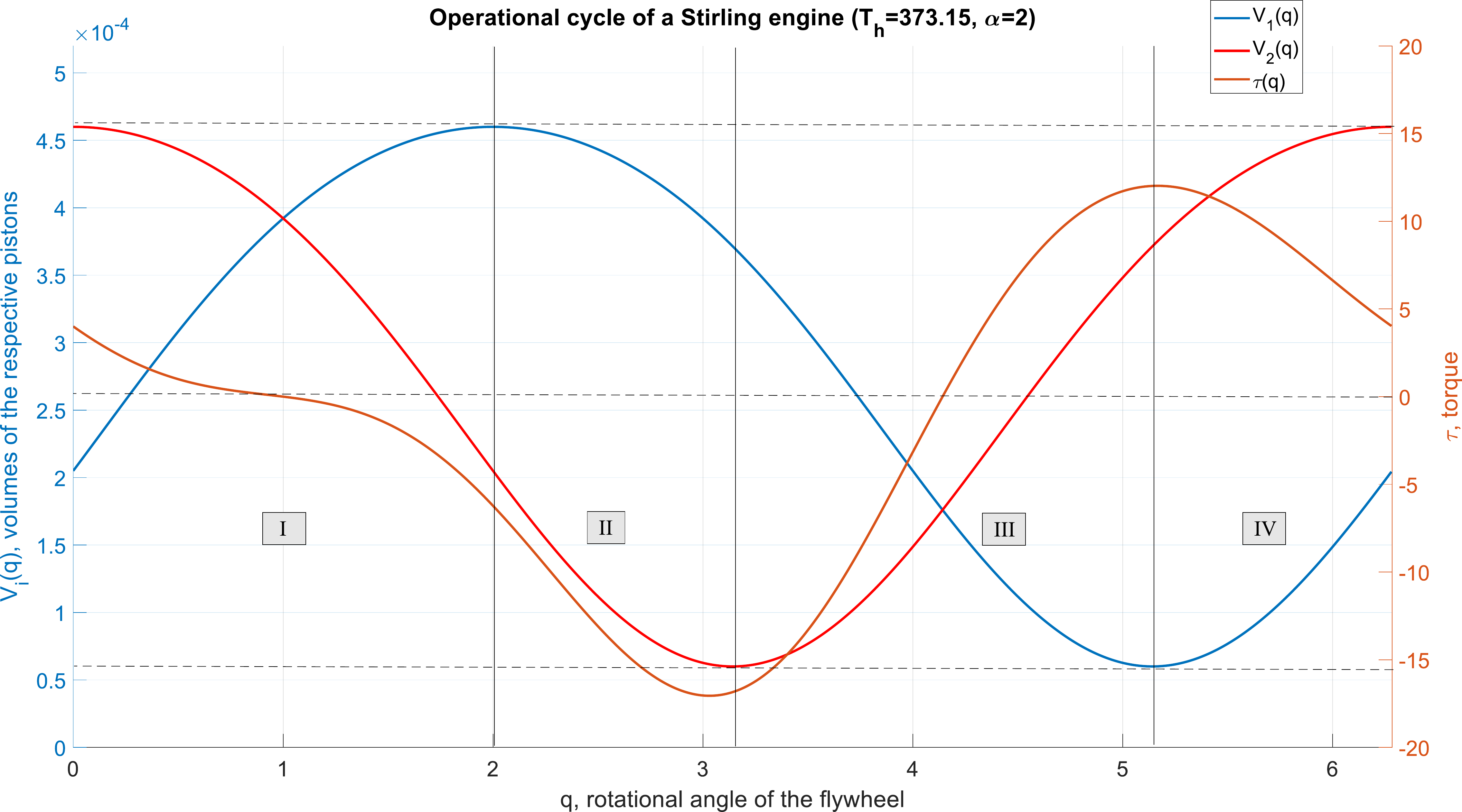}
\caption{Operation cycle of a Stirling engine: four phases.}
\label{fig:pistons_move}
\end{figure}


In the following, we separately discuss the thermodynamic and the mechanical components of a Stirling engine and finally present the coupled model. 

\subsection{Thermodynamic subsystem} 

The thermodynamic component consists of two cylinders filled with an ideal gas and connected by a regenerator.  
We consider the amount of gas within the $i$th cylinder, $i=1,2$, as a homogeneous, single-phase, and single-component thermodynamic system characterized by extensive parameters: volume $V_i$ and molar number $N_i$ as well as intensive parameters: temperature $T_i$ and pressure $p_i$. Similarly, we use $N_r$ and $V_r$ to denote the molar number and the volume of the gas within the regenerator.

There are two main assumptions that we make about the thermodynamic part:
\begin{enumerate}
\item[\bf A1.] The total amount of the substance within the engine is constant, i.e., $N_1+N_2+N_r=N$.
\item[\bf A2.] The pressure is both cylinders is equilibrated, i.e., $p_1=p_2=p$.
\end{enumerate}
While the former assumption is intuitively clear, the latter presents a certain idealization which, however, agrees well with the practice for Stirling engines working at relatively low frequencies. 

%

\paragraph{Isothermal Schmidt's model}

We will make use of the ideal isothermal model~\cite{Berchowitz:84}. Within this framework, each cylinder is assumed to be divided in two sections. The first section of the hot cylinder, where the piston moves, is called the ``expansion'' space. The second section is where the heat transfer occurs and is called the ``heater'' (see Fig.~\ref{fig:iso}). Similarly, the cold cylinder is divided in the ``contraction'' and the ``cooler'' sections. This partition ensures that the heat transfer in either of the pistons occurs through the same area and does not depend on the pistons positions.

The critical assumption is that the gases in the hot and the cold cylinders are kept at constant temperatures, equal to those of the heat source and the sink, respectively. The gas temperature within the regenerator is assumed to change linearly as shown in Fig.~\ref{fig:iso}. Thus, we have $T_1=T_h$ and $T_2=T_c$, and the linearly changing temperature of the regenerator is substituted with its {\em mean effective} temperature $T_r=(T_h-T_c)\left(\ln(T_h)-\ln(T_c)\right)$, \cite{Berchowitz:84}. The isothermal assumption allows establishing a simple relation between the gas pressure inside of the cylinders and the variation of the cylinders' volumes which, in turn, depend on the position of the pistons. 

\begin{figure}[tbh]
\centering
\includegraphics[width=0.5\columnwidth]{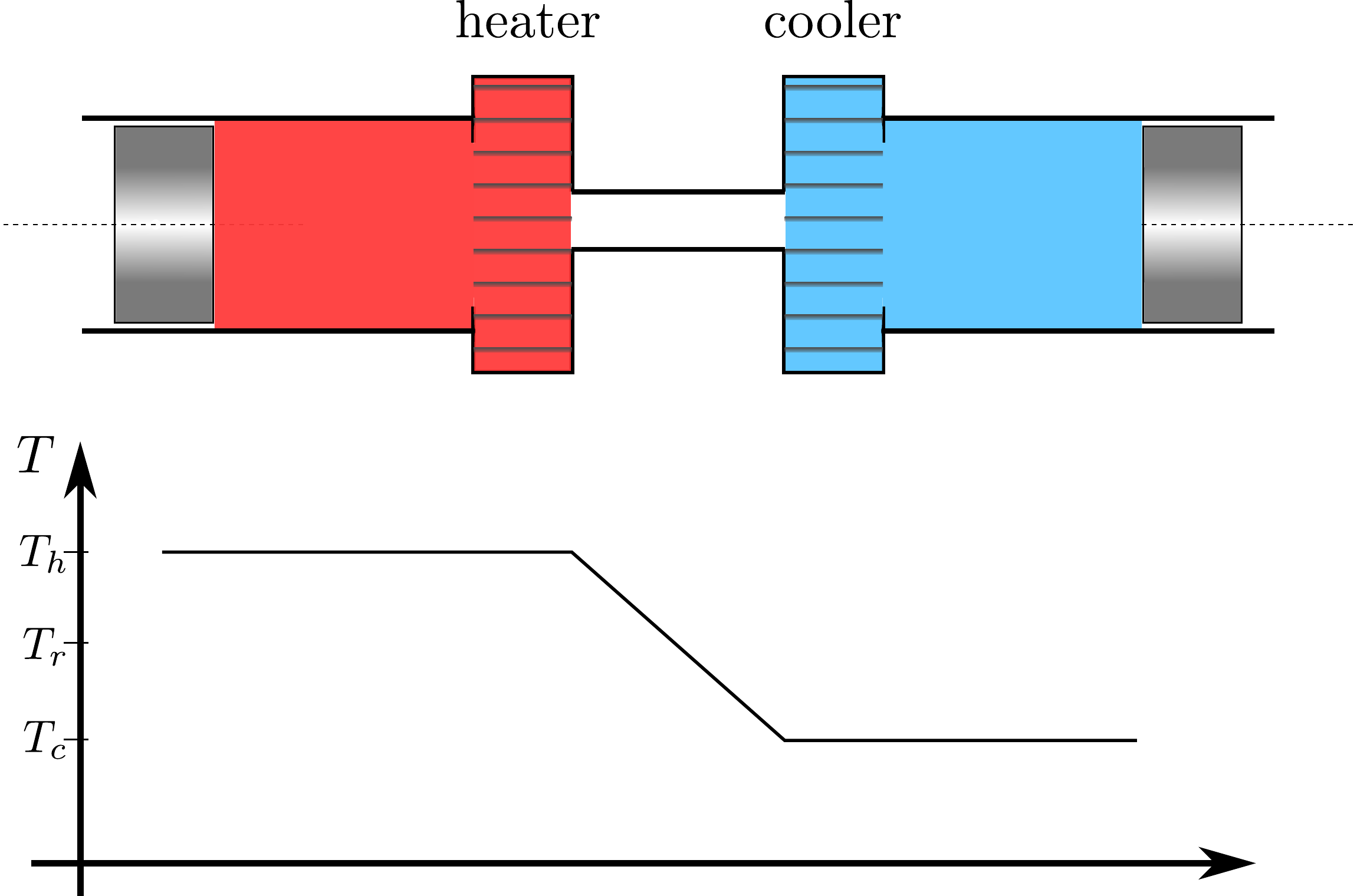}
\caption{Isothermal model.}
\label{fig:iso}
\end{figure}

Recall that, according to \textbf{A1}, the total molar number of gas within the engine is constant, thus:
\begin{equation}\label{eq:N}
 N = N_1 + N_2 + N_r \;.
\end{equation}
Each term on the right-hand side of \eqref{eq:N} can be expressed using the ideal gas law as $N_i = p V_i / R T_i$ with $R$ the universal gas constant. Substituting the respective expressions in~\eqref{eq:N} and using Assumption \textbf{A2} we solve this equation for $p$:
\begin{equation}\label{eq:iso-p0}
 p = \frac{NR}{\left(\frac{V_1}{T_h} + \frac{V_2}{T_c} + \frac{V_r}{T_r}\right)} \;,
\end{equation}
where $V_1$ and $V_2$ correspond to the total volumes of the cylinders. 

Note that both $V_r$ and $T_r$ are constant, hence one can eliminate the term $V_r/T_r$ by increasing the volumes of both 
cylinders by $\delta V = \frac{T_h T_c V_r}{(T_h+T_c)T_r}$. 
We will reduce~\eqref{eq:iso-p0} as
\begin{equation}\label{eq:iso-p}
 p = \frac{NR}{\left(\frac{V_1}{T_h} + \frac{V_2}{T_c}\right)} \;,
 \end{equation}
where, to simplify the notation, $V_1$ and $V_2$ have been renamed to the augmented volumes of the cylinders.

\subsection{Mechanical subsystem}

The mechanical part consists of a flywheel and two power pistons attached to it (see Fig.~\ref{fig:mechanical}). The angular position of the flywheel uniquely determines the linear positions of the pistons, and hence the volumes $V_i$. The volumes are thus functions of the flywheel's angle and the geometry of the cylinders.  

\begin{figure}[tbh]
\centering
\includegraphics[width=0.2\columnwidth]{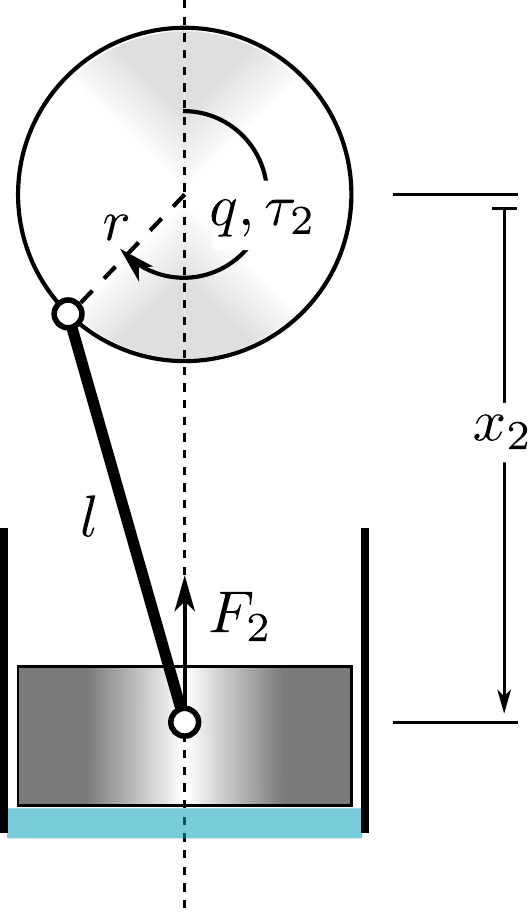}
\caption{Geometry of the flywheel and one of the cranks.}
\label{fig:torque}
\end{figure}

To compute the force developed by the pistons and the torque applied to the flywheel, we consider the second piston as shown
in Fig.~\ref{fig:torque}. Energy conservation imposes the relation 
\begin{equation} \label{eq:Fdx}
 -F_2\delta x_2 = \tau_2 \delta q \;, 
\end{equation}
where $\delta x_2$ and $\delta q$ are infinitesimal linear and angular displacements, respectively. Simple trigonometry establishes that the position of the second piston measured from the center of the flywheel is given by
\begin{displaymath}
 x_2(q) = -r \cos(q) + \sqrt{l^2-r^2\sin^2(q)} \;, 
\end{displaymath}
where $l$ is the length of the rod and $r$ the distance between the center of the flywheel and the mounting point of the shaft. 
%
Writing the infinitesimal displacement $\delta x_2$ in terms of $\delta q$ and substituting it into \eqref{eq:Fdx} gives
\begin{equation}\label{eq:tau2}
 \tau_2 = -F_2 \phi(q) \;, \quad \phi(q) = r\sin(q) - r^2 \frac{\sin(q)\cos(q)}{\sqrt{l^2-r^2\sin^2(q)}} \;.
\end{equation}

Note that the force $F_2$ can also be expressed as $F_2(q) = A_2 (p(q)-p_\mathrm{a})$, where $p(q)$ is given
by~\eqref{eq:iso-p} and~\eqref{eq:Vi}, $p_\mathrm{a}$ is the ambient pressure, and $A_2$ is the cross-section area of the second piston. Since the internal pressure in both cylinders is equal, the torques developed by both pistons depend only on the value of the rotation angle of the flywheel. We can thus write the total torque as
\begin{equation} \label{eq:tau}
 \tau(q) = -\left( A_1 \phi(q-\alpha) + A_2 \phi(q) \right)(p(q)-p_\mathrm{a}) \;.
\end{equation}
The minus sign is consistent with the ``right-hand screw rule'' and the particular choice of the directions of rotation and application of the forces $F_i$.



The volumes of the pistons are
\begin{equation}\label{eq:Vi}
 V_i(q) = V_{i,\mathrm{max}} - A_i \left(x_i(q)- (l-r)\right),\enskip i=1,2,
\end{equation}
where $V_{i,\mathrm{max}}$ is the maximal volume of the $i$th cylinder and $q$ is the angular position of the flywheel. 
Note that, for $q=0$, the position of the respective piston is $l-r$.

Finally, the dynamics of the engine are given by
\begin{equation} \label{eq:model}
 I \ddot{q} = -k_\mathrm{f} \dot{q} + \tau(q) \;,
\end{equation}
where $I$ is the moment of inertia of the flywheel and $k_\mathrm{f}$ is the friction coefficient of its bearings. In the model \eqref{eq:tau}-\eqref{eq:model}, the pressure $p$ serves as the input for the flywheel.

As a quick validation of the model, note that  
\begin{displaymath}
 -(p-p_\mathrm{a})(\dot{V}_1 + \dot{V}_2) = -\left( A_1\phi(q-\alpha) + A_2\phi(q) \right)(p-p_\mathrm{a})\dot{q} = \tau(q)\dot{q} \;.
\end{displaymath}
That is, the thermodynamic work equals the mechanical work performed on the flywheel. 

%


\section{Qualitative analysis} \label{sec:qualitative}

In this section, we prove some properties about the qualitative behavior of system~\eqref{eq:model}. The analysis follows the ideas described in the book~\cite{Barbashin:69}, which is, unfortunately, unavailable in English. We will thus translate the required material when developing the proofs below. 

First, we note that the right-hand side of~\eqref{eq:model} is bounded and jointly smooth in $q$ and $\dot{q}$. This implies that, for any $\bar{t} > 0$, the solutions of~\eqref{eq:model} exist, are unique, and defined on the interval $[0,\bar{t}]$. 
Intuitively, the equations can be identified with those of a rotational mass-spring-damper system with a nonlinear, non-monotone spring. Written in angular coordinates, the total energy of the system is 
\begin{equation}\label{eq:E} 
  E(q,\dot{q}) = \frac{1}{2}I\dot{q}^2 + U(q) \;,
\end{equation} 
where $U(q)=-\int_0^q \tau(s)\mathrm{d}s$ is the potential function. Clearly, $U(0)=0$ and $U(q)$ is smooth when seen as a function on $\R$. However, while $\tau(q)$ is a periodic function with the period of $2\pi$, $U(q)$ is not, as illustrated in Fig.~\ref{fig:Vq}. Thus, $U(q)$ undergoes a discontinuity at $q = 0$ when the function if defined on $\mathbb{S}$. In other words, 
\begin{displaymath}
 \lim_{q \to 0^+}U(q) \neq \lim_{q \to 2\pi^{-}}U(q) \;.
\end{displaymath}

We wish to derive conditions under which the system may exhibit a limit cycle. Since the state of the system is given by an angle and an angular velocity, the state space is a cylinder: $(q,\dot{q})\in \mathbb{S}\times \R$. The right-hand side of~\eqref{eq:model} is periodic w.r.t. $q$, which ensures the continuity of the vector field on the state space. We identify two types of closed orbits.
\begin{figure}
\centering
\includegraphics[width=0.3\columnwidth]{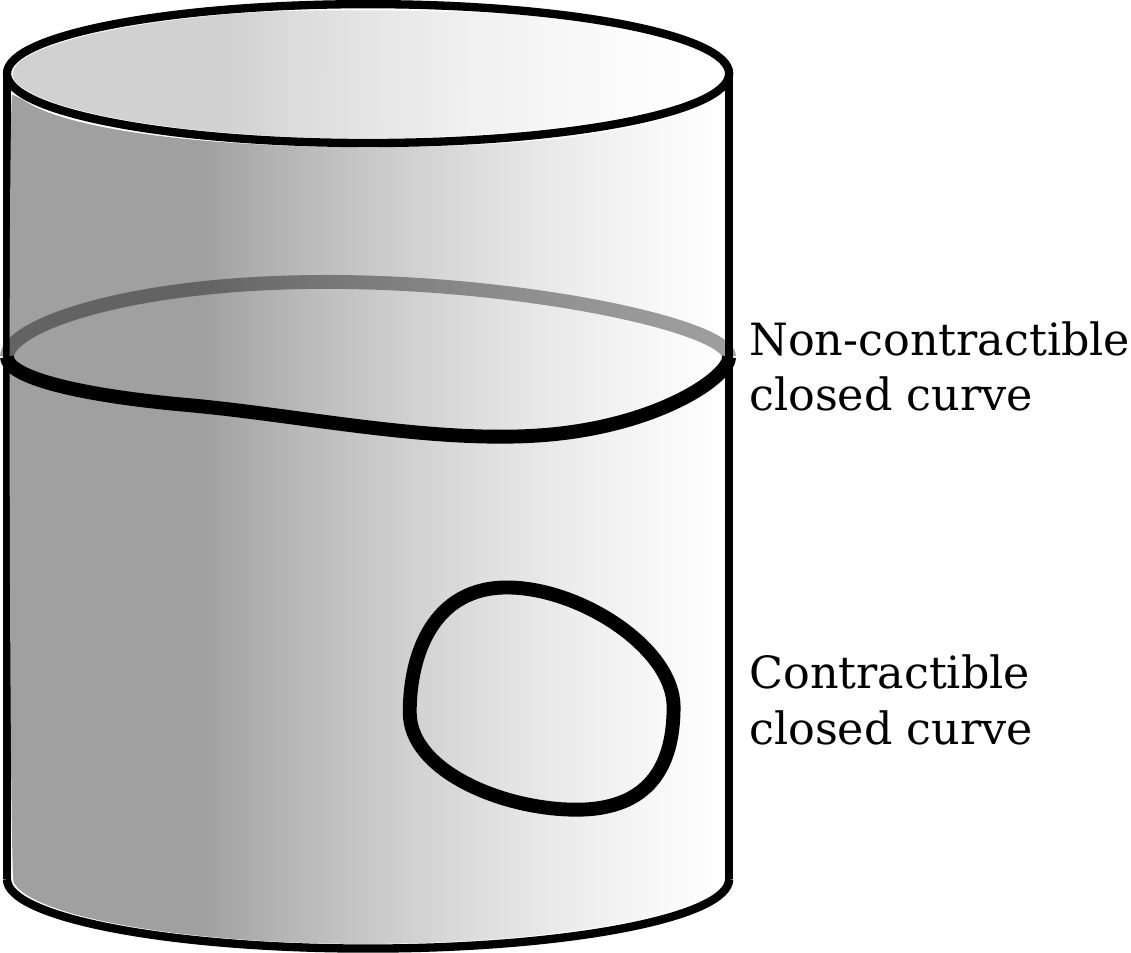}
\caption{Contractible and non-contractible curves on the cylinder.}
\label{fig:contract}
\end{figure}

\begin{defn} \label{def:contract}
  Let $\Gamma : [0,T] \to \mathbb{S}\times \R$ be a parametrized closed curve such that $\Gamma$ is injective 
  on $[0,T)$ and satisfies $\Gamma(0) = \Gamma(T)$. $T$ is called the \emph{period} of $\Gamma$. Suppose that 
  $\Gamma$ is a limit cycle. We say that $\Gamma$ is a \emph{contractible} limit cycle if it is homotopic to a 
  point $\gamma \in \mathbb{S}\times\R$. That is, if there exists a continuous function
  \begin{displaymath}
   h : [0,T]\times [0,1] \to \mathbb{S}\times \R 
  \end{displaymath}
  such that $h(t,0) = \Gamma(t)$ and $h(t,1) = \gamma$ for all $t \in [0,T]$. Otherwise, we say that 
 $\Gamma$ is a \emph{non-contractible} limit cycle. See Figure~\ref{fig:contract}.
\end{defn}

\begin{rem}
In certain applications, it is common to distinguish between {\em oscillatory} (i.e., contractible) and {\em rotational} (i.e., non-contractible) limit cycles, see, e.g.,~\cite[Def.\ 6]{Maggiore:18}. 
Alternatively, these are called the limit cycles of zeroth and first order (of homotopy) \cite{Barbashin:69}.
\end{rem}

The following theorem excludes the possibility of an oscillatory limit cycle in \eqref{eq:model}. The case of a rotational limit cycle is more involved and will be treated in detail below.
\begin{lem}\label{lem:l0}
 For $k_f>0$, system \eqref{eq:model} does not have an oscillatory limit cycle.
\end{lem}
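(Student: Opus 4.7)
The plan is to exploit the energy function $E$ from \eqref{eq:E} together with the dissipation identity that \eqref{eq:model} produces, and then to use the topological fact that a contractible loop on the cylinder lifts to a closed loop in the universal cover $\R\times\R$ on which the (otherwise multi-valued) function $E$ becomes well-defined.

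First, I would differentiate $E$ along trajectories of \eqref{eq:model}. Using $U'(q)=-\tau(q)$, the terms involving $\tau(q)$ cancel and one obtains a clean dissipation law of the form $\dot{E} = -k_{\mathrm{f}}\dot{q}^{2}$, which is non-positive whenever $k_{\mathrm{f}}>0$ and vanishes only on the locus $\dot{q}=0$.

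Next, I would argue by contradiction. Suppose $\Gamma$ were an oscillatory limit cycle of period $T$. Because $\Gamma$ is homotopic to a point on $\mathbb{S}\times\R$, any lift $(q(t),\dot{q}(t))$ to the universal cover is a genuinely closed curve: $q(T)=q(0)$ as real numbers, not merely modulo $2\pi$, and $\dot{q}(T)=\dot{q}(0)$. Consequently, $E$ returns to its initial value after one period, and integrating the dissipation identity forces $\int_{0}^{T}\dot{q}(t)^{2}\,\mathrm{d}t=0$. Since $k_{\mathrm{f}}>0$, this collapses $\Gamma$ onto the set $\{\dot{q}=0\}$, turning it into a single equilibrium point, which contradicts the definition of a limit cycle.

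The delicate step, and the one that I expect to be the main obstacle, is the lifting argument, since it is also what explains why the conclusion must be restricted to contractible cycles. Because $U$ is not periodic on $\mathbb{S}$, the functional $E$ is not single-valued on the cylinder; it becomes single-valued only after lifting, and the lift of $\Gamma$ is closed precisely when $\Gamma$ is contractible. For a rotational (non-contractible) cycle, $q(T)-q(0)$ would be a nonzero multiple of $2\pi$, so $U(q(T))\neq U(q(0))$ and the energy balance would no longer imply $\dot{q}\equiv 0$; this case therefore requires the more involved analysis that the rest of Section~\ref{sec:qualitative} is set up to undertake.
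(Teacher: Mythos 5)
Your proposal is correct and follows essentially the same route as the paper: differentiate $E$ to get $\dot{E}=-k_f\dot{q}^2$, integrate over one period of the putative contractible cycle, and conclude $\dot{q}\equiv 0$, a contradiction. Your lifting-to-the-universal-cover justification for why $E$ returns to its initial value is in fact a cleaner and more explicit version of the paper's brief appeal to continuity of $E$ on $\mathbb{S}_0\times\R$, and it correctly isolates why the argument breaks down for non-contractible cycles.
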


\begin{proof}
 The total time-derivative of~\eqref{eq:E}, taken in virtue of \eqref{eq:model}, satisfies 
 \begin{equation}\label{eq:dE}
  \frac{\mathrm{d}}{\mathrm{d}t}E(q,\dot{q})=I\ddot{q}\dot{q}+\frac{\mathrm{d}}{\mathrm{d}q} U(q)\dot{q} =
   -k_f(\dot{q})^2+\tau(q)\dot{q}-\tau(q)\dot{q}=-k_f(\dot{q})^2 \le 0 \;.
 \end{equation}
 Let $\Gamma$ be a limit cycle. Compute the line integral of $\mathrm{d}E(q,\dot{q})$ along $\Gamma$:
 \begin{displaymath}
  -k_f \int_0^T(\dot{q}(s))^2 \mathrm{d}s = \oint_\Gamma \mathrm{d}E(q,\dot{q})=0 \;,
 \end{displaymath} 
where the last equality follows from continuity of the total energy $E(q,\dot{q})$ on $\mathbb{S}_0\times\R$ with $\mathbb{S}_0=\mathbb{S}\setminus \{0\}$. Thus we have that $\dot{q}(s) \equiv 0$, which contradicts the assumption.
\end{proof}

Before proceeding to the analysis of a rotational limit cycle, we write~\eqref{eq:model} as a system of two first-order 
ODEs,
\begin{equation}\label{eq:model2}
\begin{aligned}
 \dot{z}_1 &= z_2 \\
 \dot{z}_2 &= -\frac{k_f}{I} z_2+\frac{1}{I}\tau(z_1) \;,
\end{aligned}
\end{equation}
where $(z_1,z_2) = (q,\dot{q})$. In order to eliminate time, we divide the second differential equation by the first one and 
obtain
\begin{equation}\label{eq:model3}
 \frac{\mathrm{d}z_2}{\mathrm{d}z_1}=-\frac{k_f}{I}+\frac{1}{I}\frac{\tau(z_1)}{z_2} \;.
\end{equation}
System \eqref{eq:model2} has two null isoclines: $z_2=0$ (vertical inclination) and $z_2=\tau(z_1)/k_f$ (horizontal inclination). We have $\dot{z}_2>0$ below the second isocline and $\dot{z}_2<0$ above it. Also, we have that $\dot{z}_1>0$ when $z_2>0$ and $\dot{z}_1<0$ otherwise.

A non-contractible limit cycle can be represented as a graph of a periodic function, parametrized by $z_1$: $z^*_2(z_1):\mathbb{S}\mapsto \R$. We classify the limit cycles according to the sign of $z_2$.

\begin{defn}
A  non-contractible limit cycle $z_2^*(z_1)$ is said to be {\em sign semi-definite} if either $z^*_2(z_1)\ge 0$ or $z^*_2(z_1)\le 0$ for all $z_1\in[0,2\pi)$ and {\em sign definite} if the respective inequalities are strict. Otherwise, the limit cycle is said to be {\em sign-changing}.
\end{defn}

First we note that there cannot be any sign-changing  non-contractible limit cycle. If there was such a limit cycle, it would have to cross the axis $z_2=0$ at two regular (i.e., not equilibrium) points $z_1'\in[0,2\pi)$ and $z_1''\in[0,2\pi)$. Let $z_1''$ be the point where the trajectory changes from $z_2>0$ to $z_2<0$. We can immediately observe that the system trajectory cannot be extended beyond $z_1''$ as $z_2<0$ implies $\dot{z}_1<0$ and, hence, $z_1$ cannot increase any longer. This contradicts the definition of a non-contractible limit cycle. Note that the same argument can be used to show that the mapping $z_2^*(z_1)$ is one-to-one. On the other hand, there cannot exist a sign semi-definite limit cycle as the axis $z_2=0$ is a line of vertical inclination. Finally, we note that a non-contractible limit cycle cannot pass through an equilibrium point.

\begin{lem}\label{lem:l1}
Let $k_f\neq 0$ and $z_2^*(z_1)$ be a rotational limit cycle of \eqref{eq:model2}, 
i.e., $z_2^*(0)=z_2^*(2\pi)$. Then it holds that
\begin{displaymath}
 \int_0^{2\pi}z_2^*(z_1)\mathrm{d}z_1 = - \lim_{q \to 2\pi^{-}} \frac{ U(q)}{k_f} \;. 
\end{displaymath}
\end{lem}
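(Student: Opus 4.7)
The plan is to use energy balance over one period, carefully distinguishing the potential $U$ as a function on the universal cover $\mathbb{R}$ (where it is smooth and $U(2\pi)$ is a definite real number) from $U$ on $\mathbb{S}$ (where it is discontinuous at $0$).

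First I would parametrize the rotational limit cycle by time: let $(z_1(t),z_2(t))$, $t\in[0,T]$, be the solution of \eqref{eq:model2} that traces out $\Gamma$ over one period $T$. Working on the cover, I would take $z_1(0)=0$ and $z_1(T)=2\pi$, while $z_2(0)=z_2(T)=z_2^*(0)=z_2^*(2\pi)$. By the earlier remarks, on a rotational limit cycle $z_2$ never vanishes and has constant sign, so $\dot z_1=z_2$ is of one sign and the reparametrization $t\mapsto z_1$ is a diffeomorphism from $[0,T]$ onto $[0,2\pi]$.

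Next, I would apply the energy identity \eqref{eq:dE} along the cycle. Integrating $\frac{\mathrm{d}}{\mathrm{d}t}E(z_1,z_2)=-k_f z_2^2$ from $0$ to $T$ yields
\begin{displaymath}
 E(z_1(T),z_2(T))-E(z_1(0),z_2(0)) = -k_f\int_0^T z_2(t)^2\,\mathrm{d}t \;.
\end{displaymath}
Since $z_2(T)=z_2(0)$ the kinetic parts cancel, and the left-hand side reduces to $U(2\pi)-U(0)=U(2\pi)$, where $U$ is understood as a smooth function on $\mathbb{R}$. By the continuity of $U$ on $\mathbb{R}$, this equals $\lim_{q\to 2\pi^-}U(q)$.

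Finally, I would convert the time integral on the right into an integral over $z_1$: using $\mathrm{d}z_1=z_2\,\mathrm{d}t$ and $z_2=z_2^*(z_1)$ along $\Gamma$, one has
\begin{displaymath}
 \int_0^T z_2(t)^2\,\mathrm{d}t = \int_0^T z_2(t)\,\mathrm{d}z_1 = \int_0^{2\pi} z_2^*(z_1)\,\mathrm{d}z_1 \;.
\end{displaymath}
Substituting back and solving for the integral gives the claimed identity. The main subtlety to watch is exactly the interpretation of $U(2\pi)$: on the cylinder $E$ is discontinuous across $z_1=0$, so one cannot blindly write $\oint \mathrm{d}E=0$ as in Lemma~\ref{lem:l0}; the nontrivial jump $\lim_{q\to 2\pi^-}U(q)$ is precisely what allows a rotational limit cycle to be compatible with a strictly dissipative power balance.
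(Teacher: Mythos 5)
Your proof is correct and is essentially the paper's argument in different clothing: the paper substitutes the cycle into the time-eliminated equation~\eqref{eq:model3}, multiplies by $z_2$, and integrates over $z_1\in[0,2\pi]$, which is exactly your energy balance after the change of variables $\mathrm{d}z_1=z_2\,\mathrm{d}t$ (the kinetic term vanishes by periodicity of $z_2^*$ and the torque term integrates to $-U(2\pi)$). Your explicit justification that $t\mapsto z_1$ is monotone, and the remark on the jump of $U$ on the cylinder, are correct and consistent with the discussion preceding the lemma.
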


\begin{proof}Substitute $z_2^*(z_1)$ into \eqref{eq:model3} and integrate
$$z_2^*(z_1)\frac{\mathrm{d}x^*_2(z_1)}{\mathrm{d}z_1}=-\frac{k_f}{I} z_2^*(z_1)+\frac{1}{I}\tau(z_1)$$
from $0$ to $2\pi$ to obtain
\begin{displaymath}
 0=-k_f\int_0^{2\pi}z_2^*(z_1)\mathrm{d}z_1- U(2\pi) \;,
\end{displaymath}
whence the result follows.
\end{proof}
\begin{lem}\label{lem:l2}
For $k_f\neq 0$, system \eqref{eq:model} cannot have more than one non-contractible limit cycle.
\end{lem}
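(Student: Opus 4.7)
The plan is to assume by contradiction that two distinct non-contractible limit cycles, parametrized as $z_2^{(1)}(z_1)$ and $z_2^{(2)}(z_1)$, coexist, and to derive a contradiction by combining Lemma~\ref{lem:l1} with uniqueness of solutions of~\eqref{eq:model2}.

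First, I would apply Lemma~\ref{lem:l1} separately to each cycle, obtaining
\begin{equation*}
 \int_0^{2\pi} z_2^{(1)}(z_1)\,\mathrm{d}z_1 \;=\; \int_0^{2\pi} z_2^{(2)}(z_1)\,\mathrm{d}z_1 \;=\; -\frac{1}{k_f}\lim_{q\to 2\pi^-} U(q).
\end{equation*}
By the discussion preceding the lemma, each $z_2^{(i)}$ must be sign-definite on $\mathbb{S}$, so each integral has the same strict sign as its integrand. Equality of the two integrals therefore forces $z_2^{(1)}$ and $z_2^{(2)}$ to have the same sign; without loss of generality, I take both to be strictly positive.

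Next, I would invoke uniqueness of solutions of the autonomous smooth system~\eqref{eq:model2}: because each limit cycle is an orbit of this system and both live in the open half-cylinder $\{z_2>0\}$, any point of contact would force the two orbits to coincide. Thus, as subsets of $\mathbb{S}\times\R$, the two cycles are either identical or disjoint, and by hypothesis they are disjoint. Since $z_2^{(1)}$ and $z_2^{(2)}$ are continuous periodic functions on $\mathbb{S}$ whose graphs do not meet, the continuous difference $z_2^{(1)}-z_2^{(2)}$ is nowhere zero, so by connectedness of $\mathbb{S}$ it has a constant sign. Without loss of generality, $z_2^{(1)}(z_1) > z_2^{(2)}(z_1)$ for every $z_1\in[0,2\pi)$. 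Integrating this strict inequality then gives
\begin{equation*}
 \int_0^{2\pi} z_2^{(1)}(z_1)\,\mathrm{d}z_1 \;>\; \int_0^{2\pi} z_2^{(2)}(z_1)\,\mathrm{d}z_1,
\end{equation*}
contradicting the equality established above.

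The main obstacle is the strict ordering step: one must ensure that two continuous periodic graphs on the circle that never meet are not merely disjoint but uniformly ordered, which is what permits the passage from a pointwise strict inequality to a strict inequality of integrals. A secondary subtlety is that~\eqref{eq:model3} is singular along $z_2=0$; this is harmless here because sign-definiteness confines each cycle to an open half-cylinder where~\eqref{eq:model2} is smooth and the usual ODE uniqueness theorem applies directly.
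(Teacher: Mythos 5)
Your proof is correct and follows essentially the same route as the paper: apply Lemma~\ref{lem:l1} to both cycles to equate their integrals, then use the fact that non-intersecting graphs over $\mathbb{S}$ must be strictly ordered to force a strict inequality between those same integrals. The only difference is that you spell out the ordering step (via ODE uniqueness and connectedness of $\mathbb{S}$) that the paper states without elaboration.
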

\begin{proof}
Assume that $z^*_2(z_1)$ and $\bar{z}_2(z_1)$ are two non-contractible limit cycles. Then from Lemma~\ref{lem:l1} we have
\begin{displaymath}
 \int_0^{2\pi}\left[z_2^*(z_1)-\bar{z}_2(z_1)\right]\mathrm{d}z_1= \lim_{q \to 2\pi^{-}}\left[-\frac{ U(q)}{k_f}+\frac{ U(q)}{k_f} \right]= 0 \;.
\end{displaymath}
On the other hand, since the limit cycles cannot intersect, it should hold that either $z^*_2(z_1)>\bar{z}_2(z_1)$ or $z^*_2(z_1)<\bar{z}_2(z_1)$ for all $z_1\in[0,2\pi]$. This implies that $z^*_2(z_1)= \bar{z}_2(z_1)$.
\end{proof}

Finally, we can formulate the following important result:
\begin{thm}\label{thm:t2}For $k_f>0$, there exists at most one non-contractible limit cycle in \eqref{eq:model}. If such a limit cycle exists, it is sign definite with its sign opposite to the sign of $\lim_{q \to 2\pi^-}U(q)$. 
\end{thm}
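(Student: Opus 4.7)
The plan is to assemble Theorem~\ref{thm:t2} from the three pieces that have already been put in place: Lemma~\ref{lem:l2} for uniqueness, the paragraph preceding Lemma~\ref{lem:l1} for the ruling out of sign-changing and sign semi-definite cycles, and Lemma~\ref{lem:l1} for identifying the sign. No new machinery should be required.

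First, I would invoke Lemma~\ref{lem:l2} directly: under $k_f > 0$ we already know that at most one non-contractible limit cycle can exist, so there is nothing further to prove for uniqueness. Next, I would recall that the earlier discussion has shown (i) that a sign-changing non-contractible limit cycle is impossible, because crossing the axis $z_2 = 0$ transversally from positive to negative would force $\dot z_1 < 0$ immediately after the crossing and prevent $z_1$ from completing a full revolution; and (ii) that a sign semi-definite (but not sign definite) cycle is impossible, because $z_2 = 0$ is itself a line of vertical inclination of~\eqref{eq:model2}, so any trajectory touching it without being an equilibrium would have infinite slope there and cannot remain a graph over $\mathbb{S}$. Consequently, any non-contractible limit cycle $z_2^*(z_1)$ must satisfy either $z_2^*(z_1) > 0$ for all $z_1 \in [0,2\pi)$ or $z_2^*(z_1) < 0$ for all $z_1 \in [0,2\pi)$.

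To pin down the sign, I would apply Lemma~\ref{lem:l1}, which states that
\begin{displaymath}
 \int_0^{2\pi} z_2^*(z_1)\,\mathrm{d}z_1 = -\frac{1}{k_f}\lim_{q\to 2\pi^-} U(q) \;.
\end{displaymath}
Since the left-hand side inherits the (constant) sign of $z_2^*$, and $k_f > 0$, the sign of $z_2^*$ must be opposite to the sign of $\lim_{q\to 2\pi^-} U(q)$. This finishes the theorem.

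I do not expect any genuine obstacle: Lemmas~\ref{lem:l1} and~\ref{lem:l2} together with the preceding elementary observations about isoclines do all the work, and the proof of Theorem~\ref{thm:t2} is essentially a synthesis of facts already proved. The only mild subtlety is to record cleanly that the earlier informal arguments about sign-changing and sign semi-definite cycles apply verbatim once $k_f > 0$ and a non-contractible cycle is assumed to exist, so that the dichotomy ``sign definite positive or sign definite negative'' is genuinely exhaustive before Lemma~\ref{lem:l1} is brought in.
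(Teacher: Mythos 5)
Your proposal is correct and is essentially the paper's own argument: the paper's proof of Theorem~\ref{thm:t2} simply states that the result ``follows from the previous analysis and Lemmata~\ref{lem:l1} and \ref{lem:l2},'' and your write-up is exactly that synthesis, made explicit (uniqueness from Lemma~\ref{lem:l2}, exclusion of sign-changing and sign semi-definite cycles from the isocline discussion, and the sign determination from the integral identity of Lemma~\ref{lem:l1}). No gaps.
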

\begin{proof}
This theorem follows from the previous analysis and Lemmata~\ref{lem:l1} and \ref{lem:l2}.
\end{proof}
These results are in accordance with the observed behavior of the system. For $\alpha\in(0,\pi)$, $\lim_{q\to 2\pi^{-}}U(q)> 0$ and hence the limit cycle, if it exists, lies below the horizontal axis. As $\alpha$ increases above $\pi$, the picture flips and the limit cycle appears in the upper half-plane.
\begin{figure}[tbh]
\centering
 \includegraphics[width=\textwidth]{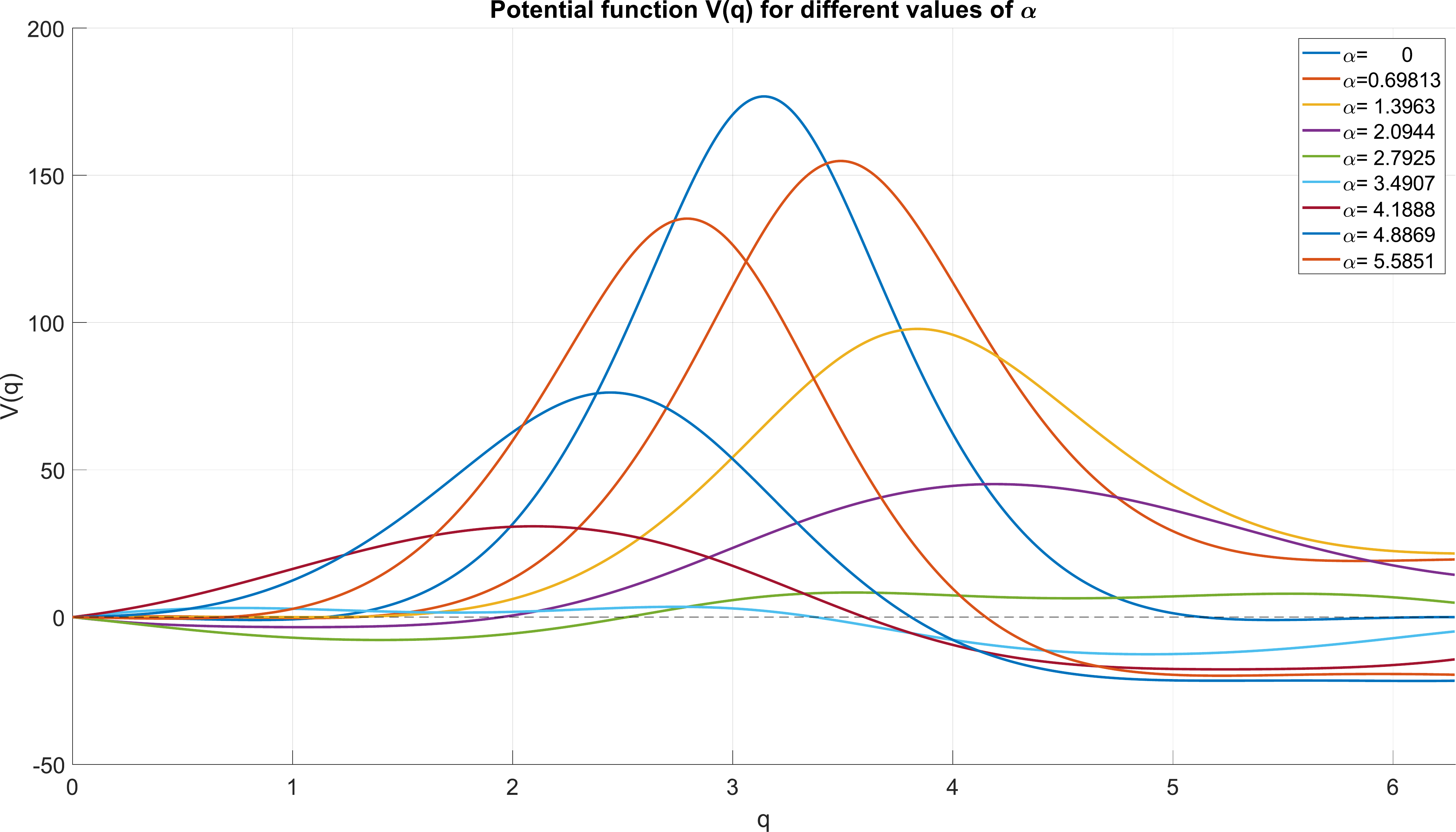}
 \caption{Potential functions $ U(q)$ for different values of $\alpha$ and $T_h=373.15$.}
\label{fig:Vq}
\end{figure}

\section{Numerical analysis} \label{sec:numerical}

In this section, we numerically determine the local and global bifurcations of~\eqref{eq:model}. There are basically two
definitions for the notion of \emph{bifurcation}. One emphasizes the loss of rank in the linearization of the vector field,
while the other is related to a qualitative change in its phase portrait. The first one is suitable 
for local analysis but fails to capture global features such as homoclinic bifurcations. We thus adhere 
to the second one~\cite{kuznetsov}. The idea of a qualitative change in the latter definition is formalized in
topological terms. More precisely, we say that two phase portraits are \emph{topologically equivalent} if there
exists a homeomorphism mapping orbits of one phase portrait to orbits of the other one. 
 
\begin{defn}
 The appearance of a topologically nonequivalent phase portrait under variation of parameters is called
 a \emph{bifurcation}.
\end{defn}

\begin{table}
\centering
\begin{tabular}{l|l|l}
 \textbf{Parameter}       & \textbf{Description}                              & \textbf{Value} \\
 \hline\hline
 $T_c$                    & Temperature of the cold bath                      & 298.15 K   \\           
 $T_h$                    & Temperature of the hot bath                       & 300 -- 500 K \\          
 $N$                      & Total amount of gas                               & 0.03 mol \\              
 $p_a$                    & Ambient pressure                                  & 100 kPa \\
 $R$                      & Universal gas constant                            & 8.314 J/$\mathrm{K}\cdot\mathrm{mol}$ \\
 \hline
 $V_{i,\max}$ & Maximum volume of the $i$th piston, $i=1,2$                   & 0.00046 $\mathrm{m}^3$ \\
 $A_i$             & Cross-sectional area of the $i$th piston, $i=1,2$        & 0.002 $\mathrm{m}^2$ \\
 $r$                      & Distance from shaft to the center of the flywheel & 0.1 m \\
 $l$                      & Length of the shaft                               & 0.3 m \\
 $\alpha$                 & Mechanical phase difference                       & 0 -- 2$\pi$ rad \\
 \hline
 $I$                      & Moment of inertia of the flywheel                 & 0.5 kg/$\mathrm{m}^2$ \\
 $k_f$                    & Friction coefficient                              & 0.1 $\mathrm{N}\cdot\mathrm{m}\cdot\mathrm{s}$/rad 
\end{tabular}
\caption{System parameters. The bifurcation parameters are $T_h$ and $\alpha$.}
\label{tab:param}
\end{table}

The system parameters are given in Table~\ref{tab:param}. We will carry out a two-parameter bifurcation
analysis with $T_h$ and $\alpha$ as the bifurcation parameters. In doing so, we will consider the complete
range for the phase angle, $\alpha \in [0,2\pi)$, while we will be only interested in a relatively low temperature
range, $T_h \in [300,500]$.

\subsection{Local bifurcations: Detection of equilibria and assessment of their stability}

The equilibria of~\eqref{eq:model} are necessarily of the form $(q^\star,0)$, where $q^\star$ is such that
$\tau(q^\star) = 0$. Finding $q^\star$ is a one-dimensional root-finding problem that can be easily solved numerically.
For each equilibrium point its stability is determined by linearization and computation of the eigenvalues. 
For an equilibrium $(q^\star,0)$, the Jacobian matrix of~\eqref{eq:model2} has the form 
\begin{displaymath}
  J=\begin{bmatrix} 0 & 1\\ -\tau'(q^\star)/I & -k_f/I \end{bmatrix} \;,
\end{displaymath}
where $\tau'(q^\star)$ is the derivative of $\tau(q)$ evaluated at $q^\star$. The respective characteristic function is $p(s)=Is^2+k_fs+\tau'(q^\star)$. Since $\tr (J) <0$, we conclude that there are no unstable foci. Thus, the system does not exhibit a bifurcation involving the crossing of the imaginary axis at complex values and the eventual appearance of the limit cycle cannot come from an Andronov-Hopf bifurcation. We also note that if such a limit cycle occurred, it would necessarily be of oscillatory, i.e., contractible type (since it surrounds the respective equilibrium point). However, this kind of limit cycle is ruled out by Lemma~\ref{lem:l0}.

Indeed, depending on the sign of $\tau'(q)$ at $q^\star$, a hyperbolic equilibrium point $(q^\star,0)$ is either a stable focus or a saddle. The non-hyperbolic equilibria correspond to triple zeros of the torque $\tau(q)$ and hence are either subcritical or supercritical pitchforks as illustrated in Fig.~\ref{fig:pitchforks}. 

\begin{figure}
\centering
 \includegraphics[width=0.5\columnwidth]{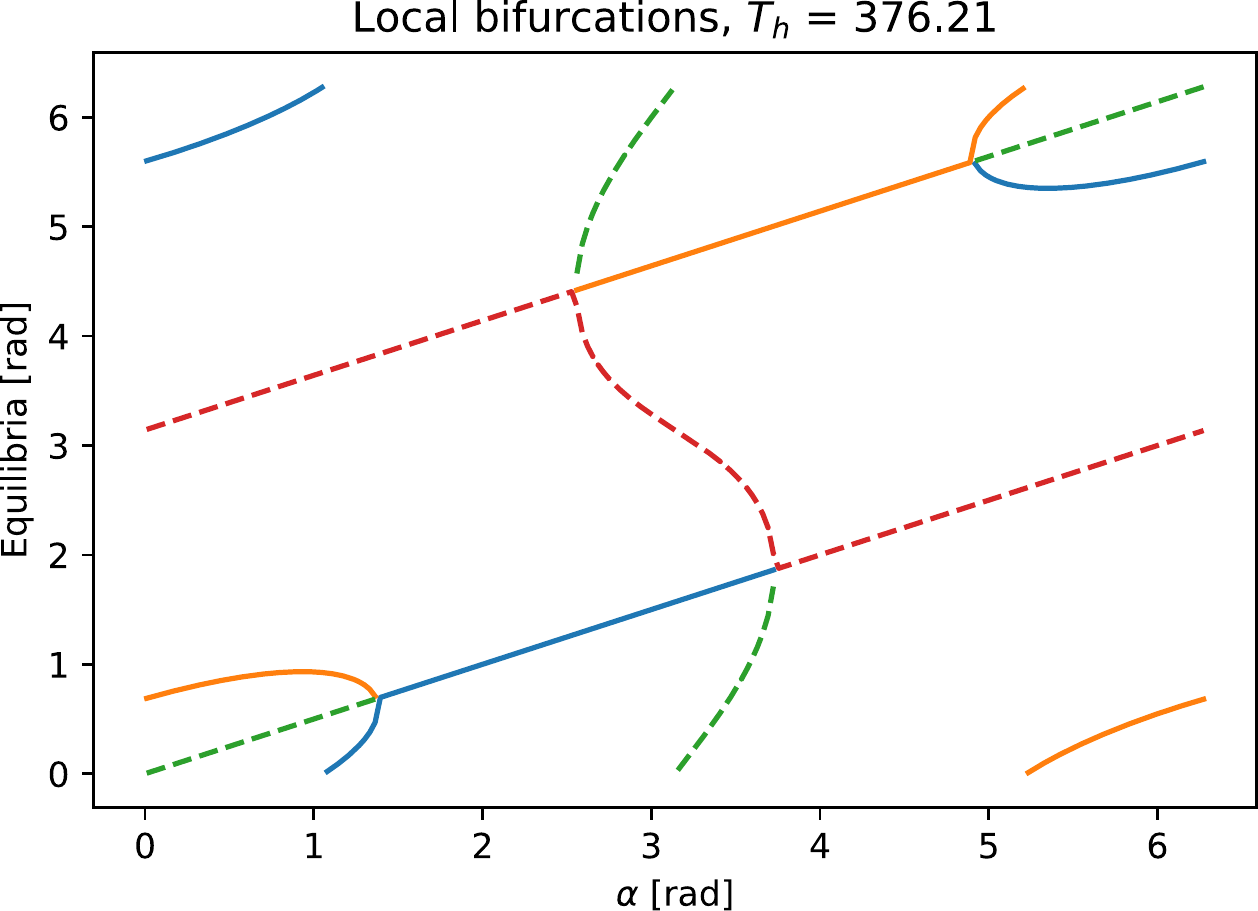}
 \caption{Local bifurcations of~\eqref{eq:model}. Solid lines correspond to stable foci and dashed lines correspond
  to saddles. The system exhibits pitchfork bifurcations of codimension one. We identify the points $\alpha = 0$ and 
  $\alpha=2\pi$, as well as the points $q=0$ and $q = 2\pi$, so the diagram should be visualized on the torus $\mathbb{S}^1\times \mathbb{S}^1$.}
\label{fig:pitchforks}
\end{figure}

An evenly spaced subset of the parameter space $\DD = [0,2\pi)\times[300,500]$ was chosen and the equilibria were computed
for each pair $(\alpha,T_h)$. The type of equilibrium (stable focus or saddle) was determined according to the sign of $\tau'(q^\star)$. Figure~\ref{fig:pitchforks}
shows the one-parameter bifurcation diagram for a fixed temperature $T_h = 376.2$ as $\alpha$ varies. The figure shows that 
the set of equilibria transitions from a set having two stable foci and two saddles to a set having only one saddle and 
one stable focus. The system transitions back and forth at four pitchforks, two subcritical and two supercritical. 
Note that the diagram is symmetrical with respect to the transformation $(q,\alpha) \mapsto (2\pi-q,2\pi-\alpha)$.

\begin{figure}[tbh]
\centering
 \includegraphics[width=0.5\columnwidth]{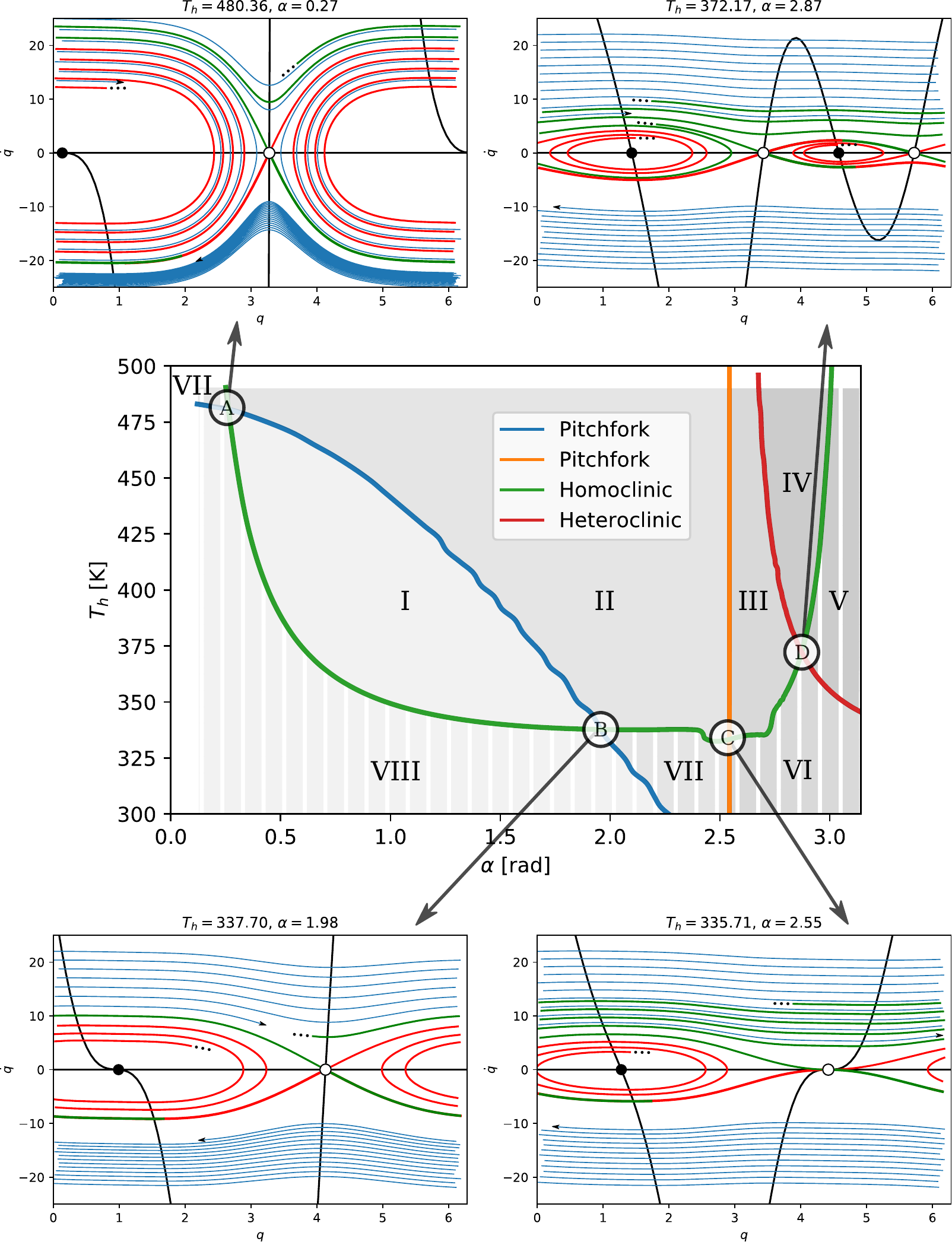}
 \caption{Bifurcation curves. The homoclinic and heteroclinic bifurcation curves are indicated by green and red lines. The occurrence 
   pitchfork bifurcations are indicated blue and orange lines. The bifurcation curves separate the parameter space into 
   eight regions, denoted by roman numerals. The intersection of bifurcation lines correspond to bifurcations of codimension
   two, denoted by letters A, B and C.}
\label{fig:complete}
\end{figure}

Before continuing, allow us to recall the following~\cite{kuznetsov}.
\begin{defn}
  The \emph{codimension} of a bifurcation in a system is the difference between the dimension of the parameter
  space and the dimension of the corresponding bifurcation boundary.
\end{defn}

\begin{figure}[tbh]
\centering
 \includegraphics[width=0.5\columnwidth]{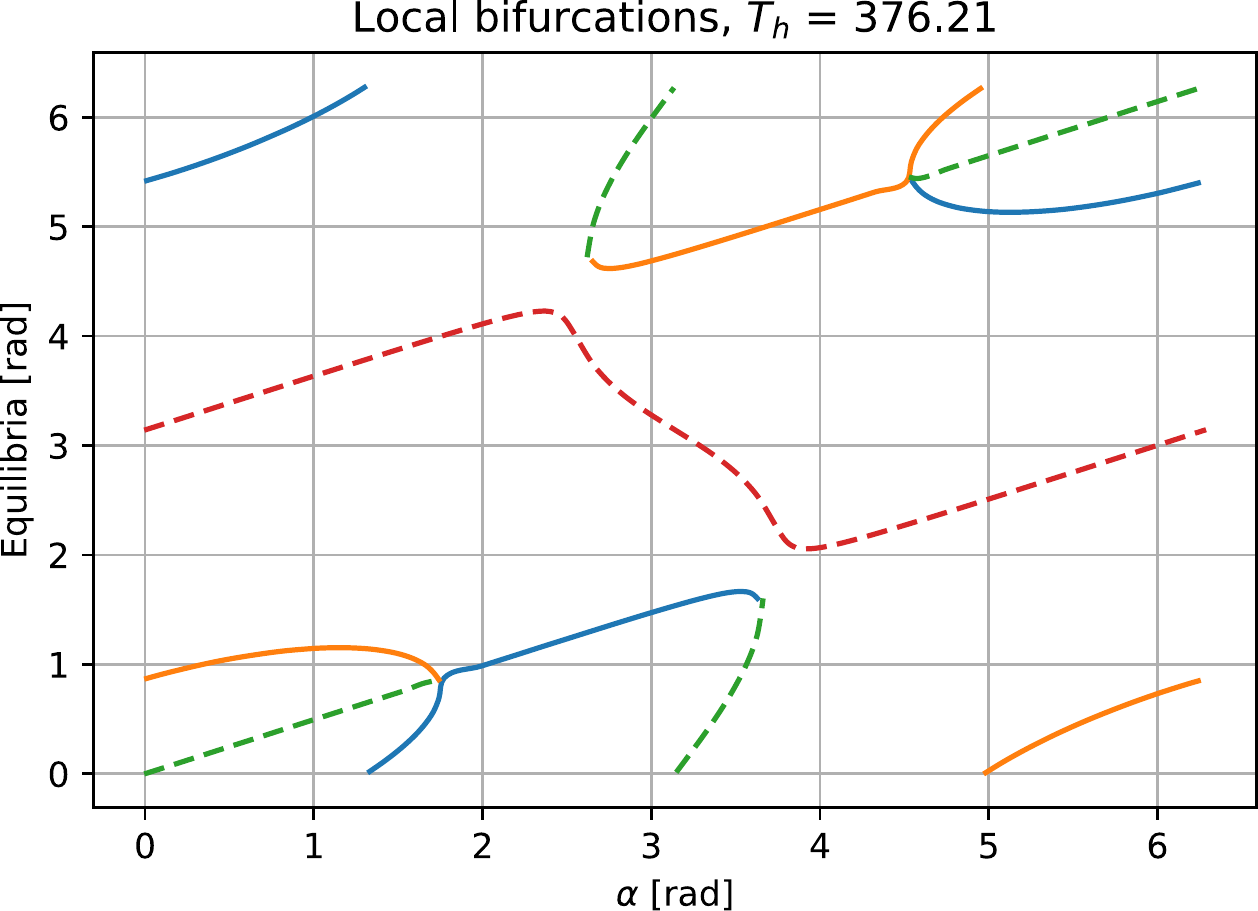}
 \caption{Local bifurcations of~\eqref{eq:model} when $A_2$ is perturbed from $2e{-}3$ to $2.05e{-}3$ $\mathrm{m}^2$.
  Two of the pitchforks shown in Fig.~\ref{fig:pitchforks} breakdown into folds.}
\label{fig:folds}
\end{figure}

Figure~\ref{fig:complete} shows the occurrence of the pitchfork bifurcations in the two-dimensional parameter space
(blue and orange lines). 
The locus of the pitchforks is one-dimensional, so the codimension of these bifurcations is, according to this 
definition, equal to one. We note, however, that the two subcritical pitchforks in the center are not structurally stable and break into folds when either of the symmetry conditions $A_1 = A_2$ or $V_1 = V_2$ is infringed (cf. Fig.~\ref{fig:folds}). 

\subsection{Global bifurcations: Continuation of homoclinic and heteroclinic bifurcations.}

\begin{figure}[tbh]
\centering
 \includegraphics[width=0.5\columnwidth]{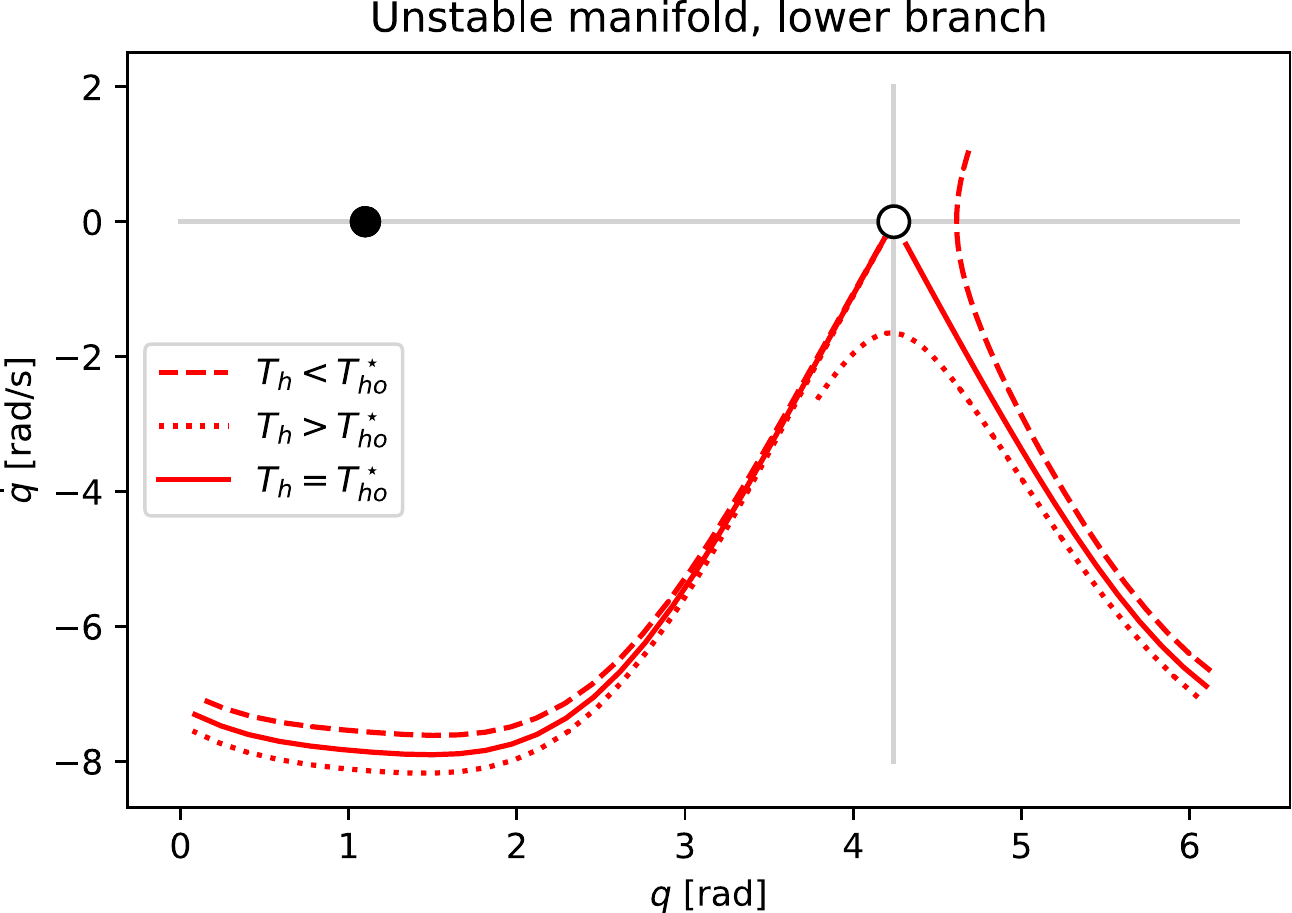}
 \caption{For a fixed $\alpha = 2.2$, a homoclinic orbit appears at the bifurcation value $T_{ho}^\star = 337.6$. 
  That is, the closure of one of the branches of the saddle's unstable manifold is a closed orbit. For lower temperatures,
  the focus is included in the closure of the unstable manifold. For higher temperatures, the closure contains 
  the orbit of a limit cycle (not shown).}
\label{fig:bisectionHomo}
\end{figure}

For practical purposes, the homoclinic bifurcation is the most important one, as it establishes
the minimal operating conditions for the engine. Regarding $\alpha$, we will only consider the
interval $[0,\pi)$, as the system behavior in the complement $[\pi, 2\pi)$ can be easily
determined from symmetry. We fix $\alpha$ and then find the temperature $T_{ho}^\star$ at which the
system exhibits a homoclinic orbit. To see how $T_{ho}^\star$ can be found, consider
Fig.~\ref{fig:bisectionHomo} and note that, for $T_{h} < T_{ho}^\star$, a trajectory going along 
the unstable manifold crosses the horizontal axis first and then converges to the focus. For $T_{h} > T_{ho}^\star$,
the trajectory crosses first a vertical axis centered on the saddle and then converges to a limit cycle. This
suggests the `test function'
\begin{displaymath}
  \psi_1(T_{h}) =
  \begin{cases}
    1 & \text{if } t_f < t_c \\
   -1 & \text{if } t_f > t_c
  \end{cases} \;.
\end{displaymath}
Here, $t_f = \min \left\{ t \mid \dot{q}(t;q_0,\dot{q}_0) = 0 \right\}$ and
$t_c = \min \left\{ t \mid q(t;q_0,\dot{q}_0) = q^\star \right\}$, and $q(t;q_0,\dot{q}_0)$
denotes the trajectory initiating at
\begin{displaymath}
  \begin{pmatrix}
   q_0 \\ \dot{q}_0
  \end{pmatrix}
  = 
  \begin{pmatrix}
   q^\star \\ 0
  \end{pmatrix} + \varepsilon \begin{cases}-v& \mbox{for}\; \alpha\in[0,\pi)\\v& \mbox{for}\; \alpha\in[\pi,2\pi)\end{cases}
\end{displaymath}
with $q^\star$ the angular component of the saddle located closer to $q=\pi$ and $v$ the unit eigenvector associated to the unstable 
eigenvalue of the system linearized at $q^\star$ and chosen in the way that its $z_2$-component is positive. 
By convention, $\min \{\emptyset\} = \infty$. 
Simply put, $\psi_1(T_{h}) = 1$ if the trajectory crosses the horizontal axis first and $\psi(T_{h}) = -1$
if the trajectory crosses the vertical axis first. 

Given two values of $T_{h}$ whose images under $\psi_1$ have different signs, it is easy to determine $T_{ho}^\star$ 
using a bisection or secant algorithm with a fixed number of iterations. The same process is repeated for different values
of $\alpha$ to construct the continuation curve shown in Fig.~\ref{fig:complete} (green line). For every
pair of parameters below the curve, that is, lying inside the hatched area, the engine does not turn. More formally: Except for a 
zero-measure set of initial conditions, every trajectory converges to a focus. If, on the other hand, 
a pair of parameters lies above the curve, then there exists a set of initial conditions having positive measure 
and such that all trajectories starting there converge to the cycle. The bifurcation is similar to
the Andronov-Leontovich bifurcation that occurs on Euclidean spaces~\cite[Ch. 6]{kuznetsov}, except that 
the bifurcations that occur in~\eqref{eq:model} lead to a contractible instead of a non-contractible limit cycle.

\begin{figure}
\centering
 \includegraphics[width=0.5\columnwidth]{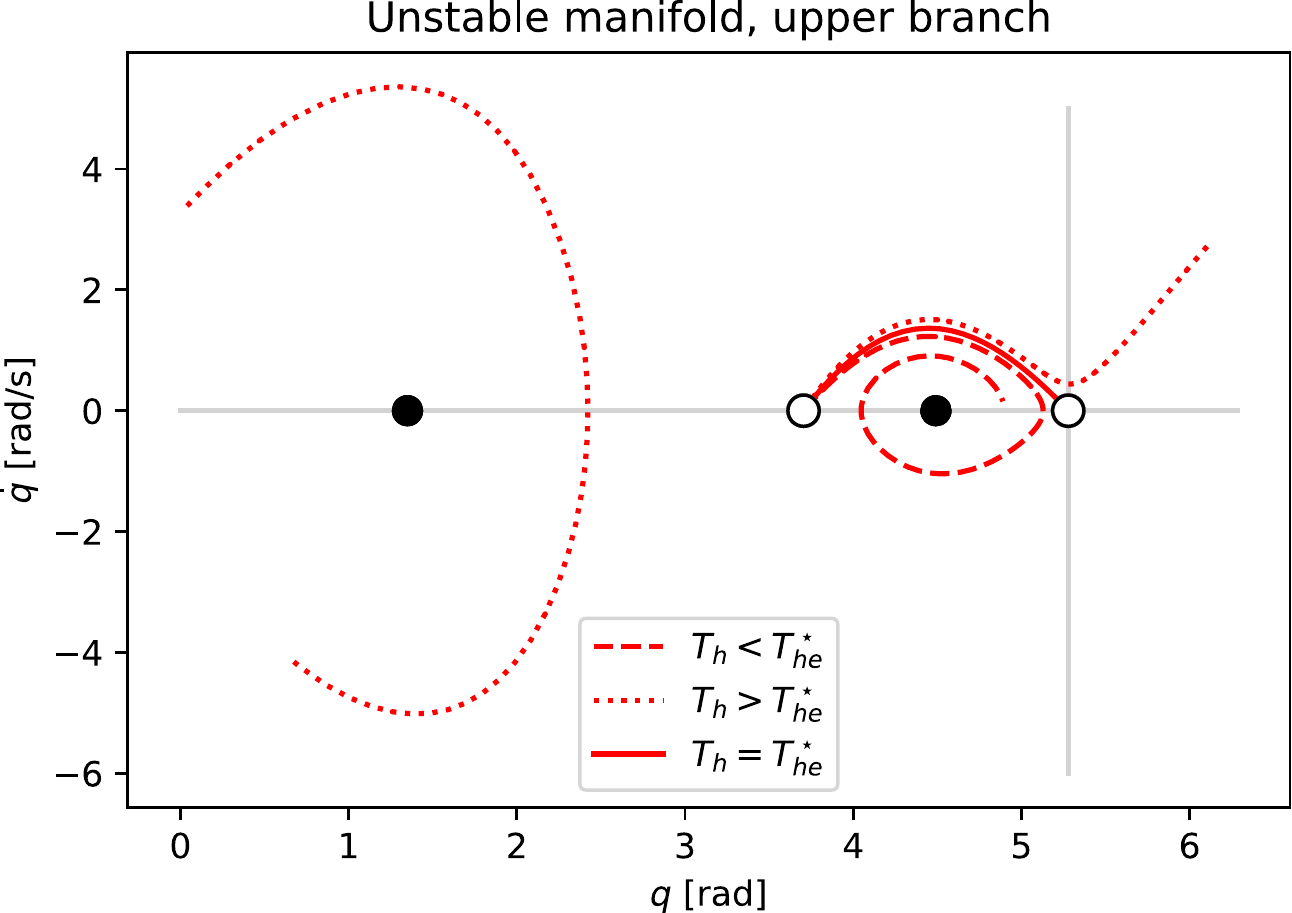}
 \caption{For a fixed $\alpha = 2.6$, a heteroclinic orbit connecting two saddles appears at the bifurcation value
  $T_{he}^\star = 451.8$. For lower temperatures, the focus sitting on the right is included in the closure of the
  unstable manifold. For higher temperatures, the closure contains the focus on the left.}
\label{fig:bisectionHetero}
\end{figure}

We will shortly see that there are parameters for which the corresponding phase plane has a heteroclinic orbit
connecting two saddles. Similar to the detection of homoclinic orbits, we fix $\alpha$ and then find the
temperature $T_{he}^\star$ at which the system exhibits the heteroclinic orbit. To see how $T_{he}^\star$ 
can be found, consider now Fig.~\ref{fig:bisectionHetero} and note that, for $T_{h} < T_{he}^\star$, a trajectory 
going along the unstable manifold of the left saddle, $q^\star$, crosses first the horizontal axis at 
the right of the saddle, while for $T_{h} > T_{he}^\star$, the trajectory crosses the vertical line at $q^{\star\star}$
with $q^{\star\star}$ the angular coordinate of the second saddle. 
This suggests the test function
\begin{displaymath}
  \psi_2(T_{h}) =
  \begin{cases}
    1 & \text{if } t_f < t_u \\
   -1 & \text{if } t_f > t_u
  \end{cases} \;.
\end{displaymath}
where $t_f$ is defined as above, $t_u = \min \left\{ t \mid q(t;q_0,\dot{q}_0) = q^{\star\star} \right\}$, and $q(t;q_0,\dot{q}_0)$
denotes the trajectory initiating at
\begin{displaymath}
  \begin{pmatrix}
   q_0 \\ \dot{q}_0
  \end{pmatrix}
  = 
  \begin{pmatrix}
   q^\star \\ 0
  \end{pmatrix} + \varepsilon \begin{cases}v& \mbox{for}\; \alpha\in[0,\pi)\\-v& \mbox{for}\; \alpha\in[\pi,2\pi)\end{cases}
\end{displaymath}

As before, given two values of $T_{h}$ whose images under $\psi_2$ have different signs, it is easy to determine $T_{he}^\star$ 
using a bisection or secant algorithm with a fixed number of iterations. The resulting continuation curve is shown in 
Fig.~\ref{fig:complete} (red line). It can be seen that the bifurcation is also of codimension one.

\subsection{Higher codimension bifurcations}

Fig.~\ref{fig:complete} shows that the bifurcation curves divide the parameter space into eight regions.
Within each region, every pair of parameters correspond to the same topologically equivalent class of phase planes. 
The bifurcation curves intersect transversely and, at the intersections, we can find bifurcations of codimension two.
These are highly degenerate conditions, unlikely to manifest physically, but which are useful from an analytic 
point of view, inasmuch as several nonequivalent phase planes can be obtained by arbitrarily small perturbations of
the parameters. 

It is interesting to note that, when both the homoclinic and the heteroclinic orbits
coexist (point D in Fig.~\ref{fig:complete}), they both originate from the same saddle, but each belongs to a different branch of the unstable manifold.

\begin{figure}
\centering
 \includegraphics[width=0.95\columnwidth]{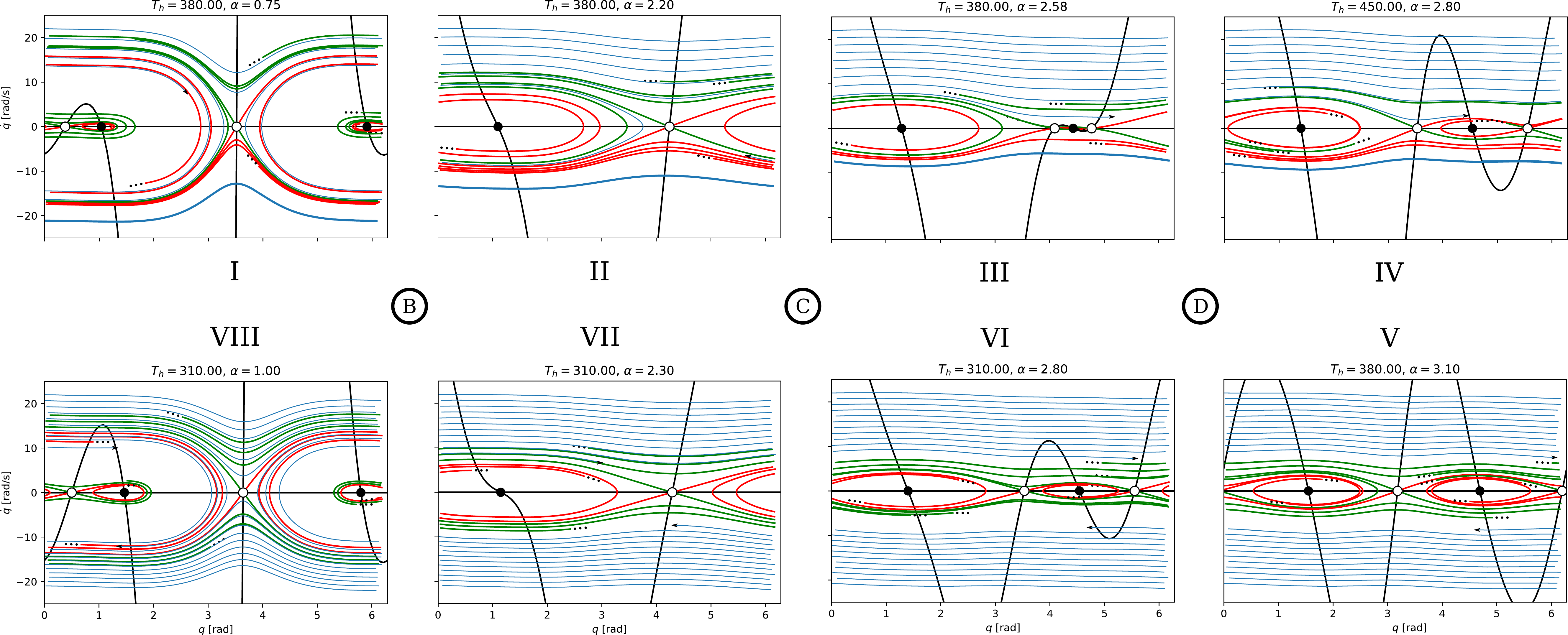}
 \caption{Possible generic behaviors of~\eqref{eq:model}. Regions I, II, VII and VIII in Fig.~\ref{fig:complete} 
  can be obtained as perturbations of the parameter pair denoted by B. Regions III, IV, V and VI can be obtained 
  as perturbations of D. 
  }
\label{fig:regionsBCD}
\end{figure}

Figure~\ref{fig:regionsBCD} shows four nonequivalent phase planes that result from perturbations of the parameter pairs 
denoted by B, C and D in Fig.~\ref{fig:complete}. Except for the bifurcation C, \emph{all} qualitatively different
behaviors can be obtained as perturbations of the bifurcations B and D. It can be observed that the bifurcations A
and B are actually the same (they are topologically equivalent).

\subsection{Output power}

Figure~\ref{fig:work} shows the pressure and total volume of the cylinders as $q$ varies from 0 to $2\pi$.
The area inside the curve corresponds to the work delivered on one cycle $\Gamma$,
\begin{displaymath}
 W = \oint_\Gamma p\mathrm{d} V = \int_0^{2\pi}p(q)V'(q)\mathrm{d}q \;.
\end{displaymath}
For a given pair of parameters $(\alpha,T_h)$ we compute $W$ by differentiating $V$ symbolically and 
integrating the integrand numerically. To compute the average power we require the period, $T$, of the limit cycle, if it
exists. The problem of finding $T$ is formulated as a boundary value problem. First, time is rescaled as
$\tau = t/T$ so that, in the new time scale, the period is fixed to 1. The differential
equation~\eqref{eq:model2} takes the form
\begin{equation} \label{eq:bvp}
\begin{split}
 \frac{\mathrm{d} z_1}{\mathrm{d} \tau} &= T z_2 \\
 \frac{\mathrm{d} z_2}{\mathrm{d} \tau} &= \frac{T}{I}\left( -k_f z_2 + \tau(z_1) \right)
\end{split} \;.
\end{equation}
The boundary value problem is that of finding $T$ and a solution to~\eqref{eq:bvp}, subject to the
boundary conditions
\begin{displaymath}
 z_1(0) = 0 \;, \quad z_1(1) = 2\pi \quad \text{and} \quad z_2(0) = z_2(1) \;.
\end{displaymath}
The problem can be solved using standard software.

\begin{figure}[tbh]
\centering
 \includegraphics[width=0.4\columnwidth]{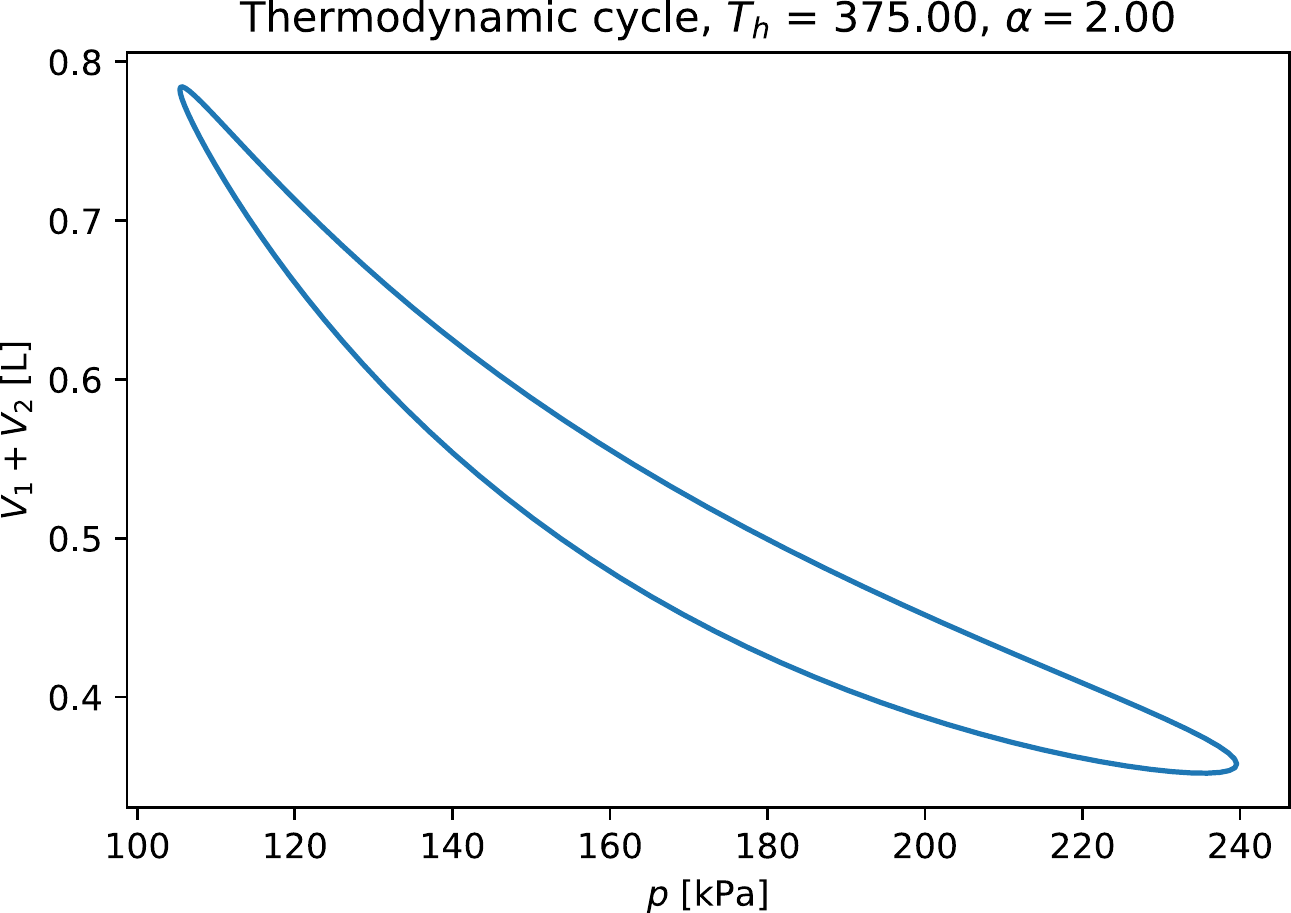}
 \caption{Pressure vs. volume as $q$ varies from 0 to $2\pi$. The area inside the curve corresponds to
  the work delivered by an engine during one cycle}
\label{fig:work}
\end{figure}

\begin{figure}[tbh]
\centering
 \includegraphics[width=0.6\columnwidth]{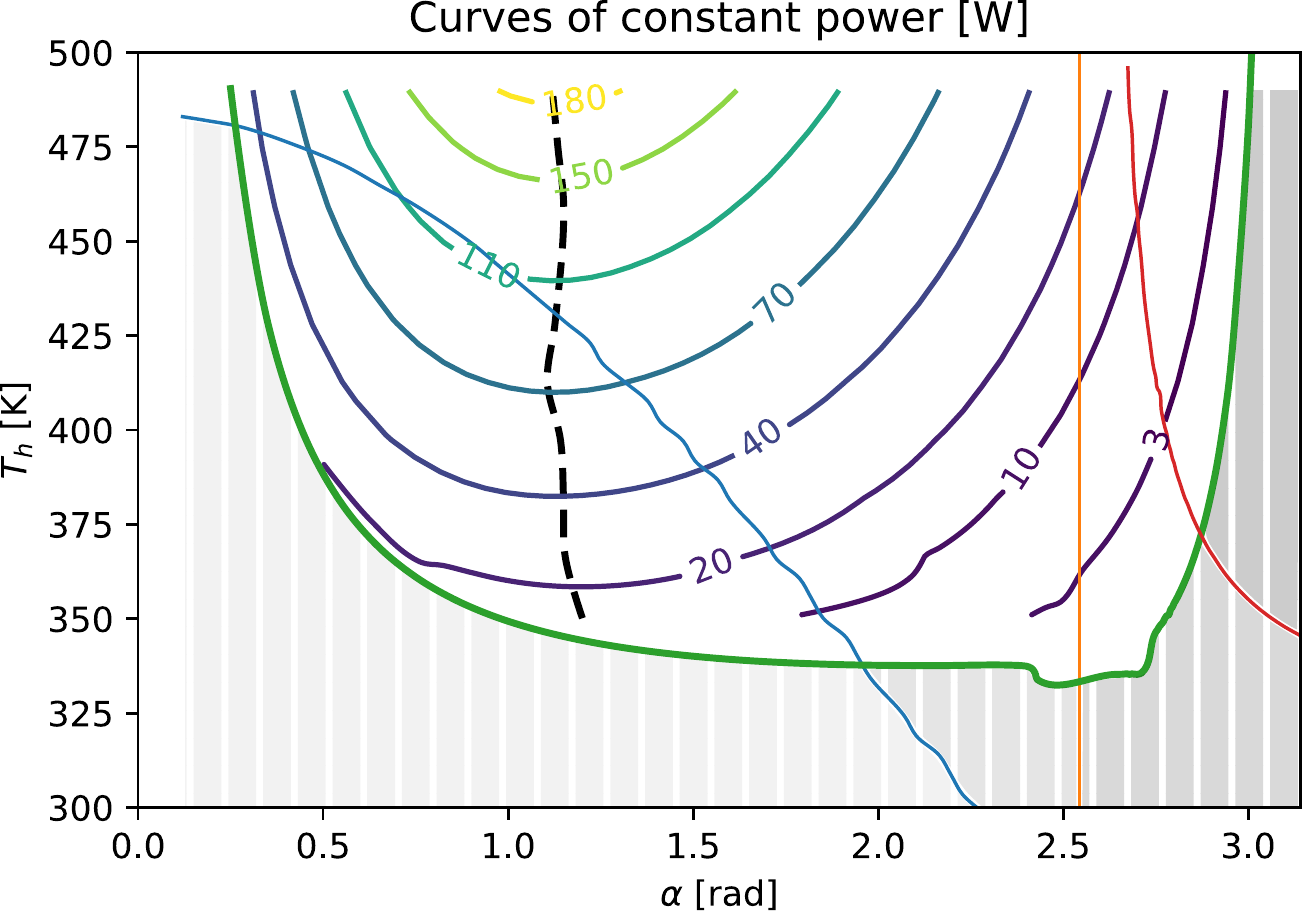}
 \caption{Contour plot of the average output power 
  $\hat{P}(\alpha,T_h) = \frac{1}{T} \int_0^{2\pi}p(q)V'(q)\mathrm{d}q$. Bifurcation curves are indicated
  as in Fig.~\ref{fig:complete}. The locus indicated by the dashed line consists of points of the form
  $(\alpha^\star, T_h)$ with $\alpha^\star = \argmax_{\alpha} \hat{P}(\alpha,T_h)$.}
\label{fig:power}
\end{figure}

The average power $\hat{P}(\alpha,T_h) = W/T$ was computed for different pairs $(\alpha,T_h)$. A contour plot is shown in Fig.~\ref{fig:power}.
As expected, the output power approaches zero as the parameters approach the homoclinic bifurcation curve. Based on the performed analysis we can make a practically important observation: for considered values of parameters, the value of $\alpha$ corresponding to the maximal average power is nearly constant and is approximately equal to $1.2$ (see the dashed line in Fig.\ \ref{fig:power}). 
%




\section{Conclusions}

In this paper, we considered an interconnected system consisting of a Stirling engine and a mechanical load (a flywheel). The mechanism of the appearance of a limit cycle is studied in detail and a thorough bifurcation analysis of the considered thermo-mechanic system is carried out, both analytically and numerically. It has been shown that for each particular value of the phase shift between the pistons positions, $\alpha$, there is a critical hot source temperature that corresponds to the occurrence of a limit cycle. Furthermore, the study reveals that $\alpha$ -- which is typically chosen according to empirical considerations -- can be used to optimize the system output power. To the best of our knowledge, this is the first result aimed at determining the optimal value of the phase shift.

The present research will be extended along the following lines. Firstly, a more elaborate model of the Stirling engine will be employed to verify the results obtained for the Schmidt isothermal model. Secondly, a detailed sensitivity analysis will be carried out in order to estimate the range of the parameters values for which the described phenomena take place.
Given that there are eight different generic phase planes, it is not unreasonable to conjecture the existence of a bifurcation
of codimension three such that all the possible behaviors of the engine are obtained as perturbations of this
single bifurcation. A third line of research is thus to enlarge the dimension of the bifurcation parameter space and investigate if
such a degenerate bifurcation exists. If this is indeed the case, the model can be considerably simplified without loosing the
qualitative properties of the original one.

\bibliography{Stirling}          

\begin{thebibliography}{10}

\bibitem{Abbas:08}
M.~Abbas, N.~Said, and B.~Boumeddane.
\newblock Thermal analysis of {S}tirling engine solar driven.
\newblock {\em Revue des Energies Renouvelables}, 11(4):503--514, 2008.

\bibitem{Alanne:10}
Kari Alanne, Niklas S{\"o}derholm, Kai Sir{\'e}n, and Ian Beausoleil-Morrison.
\newblock Techno-economic assessment and optimization of {S}tirling engine
  micro-cogeneration systems in residential buildings.
\newblock {\em Energy Conversion and Management}, 51(12):2635--2646, 2010.

\bibitem{Alvarez:10}
Alejandro Alvarez-Aguirre, Elo{\'\i}sa Garc{\'\i}a-Canseco, and Jacquelien
  M.~A. Scherpen.
\newblock Linear dynamics and control of a kinematic wobble-yoke {S}tirling
  engine.
\newblock In {\em 49th IEEE Conference on Decision and Control (CDC)}, pages
  2747--2752. IEEE, 2010.

\bibitem{Andronov:59}
Aleksandr~A. Andronov and Evgeniya~A. Leontovich.
\newblock On the generation of limit cycles from a loop of a separatrix and
  from the separatrix of the state of equilibrium of saddle-knot type.
\newblock {\em Matematicheskii Sbornik}, 90(3):335--376, 1959.
\newblock In Russian.

\bibitem{Barbashin:69}
E.~A. Barbashin and V.~A. Tabueva.
\newblock {\em Dynamical systems with cylindrical phase space}.
\newblock Nauka, Moskow, 1969.
\newblock In Russian.

\bibitem{Berchowitz:84}
D.~M. Berchowitz and I.~Urieli.
\newblock {\em Stirling Cycle Engine Analysis}.
\newblock Institute of Physics Publishing, 1984.

\bibitem{Cheng:11}
Chin-Hsiang Cheng and Ying-Ju Yu.
\newblock Dynamic simulation of a beta-type {S}tirling engine with cam-drive
  mechanism via the combination of the thermodynamic and dynamic models.
\newblock {\em Renewable energy}, 36(2):714--725, 2011.

\bibitem{CraunBamieh:15}
Mitchel Craun and Bassam Bamieh.
\newblock Optimal periodic control of an ideal {S}tirling engine model.
\newblock {\em Journal of Dynamic Systems, Measurement, and Control},
  137(7):071002, 2015.

\bibitem{Ferreira:16}
Ana~C. Ferreira, Manuel~L. Nunes, Jos{\'e} C.~F. Teixeira, Lu{\'\i}s A. S.~B.
  Martins, and Senhorinha F. C.~F. Teixeira.
\newblock Thermodynamic and economic optimization of a solar-powered {S}tirling
  engine for micro-cogeneration purposes.
\newblock {\em Energy}, 111:1--17, 2016.

\bibitem{Filis:09}
Avishai Filis, Zvi~Bar Haim, Nachman Pundak, and Ramon Broyde.
\newblock Microminiature rotary {S}tirling cryocooler for compact, lightweight,
  and low-power thermal imaging systems.
\newblock In {\em Infrared Technology and Applications XXXV}, page 729818.
  International Society for Optics and Photonics, 2009.

\bibitem{HauserBamieh:15}
John Hauser and Bassam Bamieh.
\newblock Dynamics of a driven {S}tirling engine.
\newblock In {\em 2015 54th IEEE Conference on Decision and Control (CDC)},
  pages 3730--3735. IEEE, 2015.

\bibitem{Hooshang:16}
M.~Hooshang, R.~Askari Moghadam, and S.~AlizadehNia.
\newblock Dynamic response simulation and experiment for gamma-type {S}tirling
  engine.
\newblock {\em Renewable energy}, 86:192--205, 2016.

\bibitem{Kong:03}
Bancha Kongtragool and Somchai Wongwises.
\newblock A review of solar-powered {S}tirling engines and low temperature
  differential {S}tirling engines.
\newblock {\em Renewable and Sustainable energy reviews}, 7(2):131--154, 2003.

\bibitem{kuznetsov}
Yuri~A. Kuznetsov.
\newblock {\em Elements of Applied Bifurcation Theory}.
\newblock Springer-Verlag, New York, 1998.

\bibitem{Maggiore:18}
Alireza Mohammadi, Manfredi Maggiore, and Luca Consolini.
\newblock Dynamic virtual holonomic constraints for stabilization of closed
  orbits in underactuated mechanical systems.
\newblock {\em Automatica}, 94:112--124, 2018.

\bibitem{MuellerCaines:15}
Carl Mueller-Roemer and Peter~E. Caines.
\newblock An isothermal energy function state space model of a {S}tirling
  engine.
\newblock {\em IFAC-PapersOnLine}, 48(11):634--639, 2015.

\bibitem{Onov:06}
H.I. Onovwiona and V.~Ismet Ugursal.
\newblock Residential cogeneration systems: review of the current technology.
\newblock {\em Renewable and sustainable energy reviews}, 10(5):389--431, 2006.

\bibitem{Riofrio:08}
Jos{\'e}~A. Riofrio, Khalid Al-Dakkan, Mark~E. Hofacker, and Eric~J. Barth.
\newblock Control-based design of free-piston {S}tirling engines.
\newblock In {\em 2008 American Control Conference}, pages 1533--1538. IEEE,
  2008.

\bibitem{Schmidt:1871}
G.~Schmidt.
\newblock The theory of {L}ehmann's calorimetric machine.
\newblock {\em Zeitschrift des Vereines Deutscher Ingenieure}, 15, 1871.

\bibitem{Schreiber:07}
J.~Schreiber and W.~Wong.
\newblock Advanced {S}tirling radioisotope generator for {NASA} space science
  and exploration missions.
\newblock Technical report, National Aeronautics and Space Administration,
  Cleveland, OH, 2007.

\bibitem{Schulz:96}
S.~Schulz and F.~Schwendig.
\newblock A general simulation model for {S}tirling cycles.
\newblock {\em Journal of Engineering for Gas Turbines and Power}, 118(1):1--7,
  1996.

\bibitem{Tlili:08}
Iskander Tlili, Youssef Timoumi, and Sassi~Ben Nasrallah.
\newblock Analysis and design consideration of mean temperature differential
  stirling engine for solar application.
\newblock {\em Renewable Energy}, 33(8):1911--1921, 2008.

\bibitem{Walker:80}
Graham Walker.
\newblock {\em {S}tirling engines}.
\newblock Oxford University Press, New York, 1980.

\end{thebibliography}

\end{document}